\newtheorem{theorem}{Theorem}[section]
\newtheorem{pro}[theorem]{Proposition}
\newtheorem{remark}[theorem]{Remark}
\newtheorem{open}[theorem]{Open Problem}
\newcommand{\R}{\mathbb{R}}
\def\N{\mathbb{N}}
\def\epsilon{\varepsilon}
\def\dsp{\displaystyle}
\newcommand{\be}{\begin{equation}}
\newcommand{\ee}{\end{equation}}
\newcommand{\baa}{\begin{array}}
\newcommand{\eaa}{\end{array}}
\newcommand{\ba}{\begin{eqnarray}}
\newcommand{\ea}{\end{eqnarray}}
\numberwithin{equation}{section}
\begin{document}
\date{}
\title{{\bf{Log-concavity and anti-maximum principles for semilinear and linear elliptic equations}}\vskip 0.5cm
Log-concavit\'e et principes d'anti-maximum pour des \'equations elliptiques lin\'eaires et semi-lin\'eaires}
\author{Fran\c cois Hamel and Nikolai Nadirashvili\\
\\
\footnotesize{Aix Marseille Univ, CNRS, I2M, Marseille, France \thanks{This work has been supported by the French Agence Nationale de la Recherche (ANR), in the framework of the ReaCh project (ANR-23-CE40-0023-02). The authors are grateful to V.~Bobkov, C.~De~Coster, R.~Pardo and G.~Sweers for helpful discussions on some relevant references. E-mail addresses: francois.hamel@univ-amu.fr, nikolay.nadirashvili@univ-amu.fr}}}
\maketitle

\begin{center}
{\it \`A Ha{\"{\i}}m Brezis, avec respect et admiration pour un grand ma{\^{\i}}tre de l'analyse}
\end{center}

\vskip 0.5cm
\begin{abstract}
\noindent{}This paper is concerned with existence and qualitative properties of positive solutions of semilinear elliptic equations in bounded domains with Dirichlet boundary conditions. We show the existence of positive solutions in the vicinity of the linear equation and the log-concavity of the solutions when the domain is strictly convex. We also review the standard results on the log-concavity or the more general quasi-concavity of solutions of elliptic equations. The existence and other convergence results especially rely on the maximum principle, on a quantified version of the anti-maximum principle, on the Schauder fixed point theorem, and on some a priori estimates.
\vskip 6pt
\centerline{\bf{R\'esum\'e}}
\noindent{}Cet article porte sur des questions d'existence et des propri\'et\'es qualitatives de solutions strictement positives d'\'equations elliptiques semi-lin\'eaires dans des domaines born\'es avec des conditions au bord de type Dirichlet. Nous montrons l'existence de solutions strictement positives au voisinage d'une \'equation lin\'eaire et la log-concavit\'e des solutions lorsque le domaine est strictement convexe. Nous pr\'esentons \'egalement une synth\`ese des r\'esultats classiques sur la log-concavit\'e et sur la notion plus g\'en\'erale de quasi-concavit\'e pour des \'equations elliptiques. L'existence de solutions et des r\'esultats suppl\'ementaires de convergence reposent notamment sur le principe du maximum, sur une version quantifi\'ee du principe d'anti-maximum, sur le th\'eor\`eme de point fixe de Schauder, et sur des estimations a priori.
\vskip 6pt
\noindent{\small{\it{Keywords}}: Semilinear elliptic equations; qualitative properties; log-concavity; anti-maximum principle.}
\vskip2pt
\noindent{\small{\it{Mots-cl\'es}} : \'Equations elliptiques semi-lin\'eaires~; propri\'et\'es qualitatives~; log-concavit\'e~; principe d'anti-maximum.} 
\vskip2pt
\noindent{\small{\it{Mathematics Subject Classification}}: 35A16, 35B20, 35B30, 35B50, 35J15, 35J61.}
\end{abstract}


\section{Introduction and main results}\label{intro}

This paper is concerned with existence, convergence and qualitative properties of positive solutions of semilinear elliptic equations of the type
$$\Delta u+f(x,u,\nabla u)=0$$
in bounded domains $\Omega\subset\R^N$, with Dirichlet boundary conditions on $\partial\Omega$. More precisely, we will focus on equations which are close in some sense to the linear case, and the qualitative results refer to concavity properties of the solutions inherited from the domain when it is convex. We both show some new concavity properties beyond the linear case and we will also review some known results on the quasi-concavity and log-concavity of positive functions. To establish the existence and convergence results, we will use the Schauder fixed point theorem as well as the maximum and anti-maximum principles. We especially prove new quantified versions of the anti-maximum principle, and we also review some of the main contributions on this topic. Throughout the paper, $N$ is any positive integer and~$\Omega$ is a non-empty bounded open connected subset of $\R^N$ (a domain), with boundary of class~$C^{1,1}$.

Let $\lambda_1>0$ be the smallest eigenvalue of the operator $-\Delta$ with Dirichlet boundary conditions on $\partial\Omega$. It is both given by the Rayleigh variational formula
$$\lambda_1=\min_{\phi\in H^1_0(\Omega),\,\|\phi\|_{L^2(\Omega)}=1}\ \int_\Omega|\nabla\phi|^2,$$
where $|\ |$ denotes the Euclidean norm in $\R^N$, and it is characterized by the existence of an eigenfunction $\varphi_1\in C^\infty(\Omega)\cap C^1(\overline{\Omega})$ satisfying
\be\label{varphi1}\left\{\baa{rcll}
\Delta\varphi_1+\lambda_1\varphi_1 & \!\!=\!\! & 0 & \!\hbox{in $\Omega$},\vspace{3pt}\\
\varphi_1 & \!\!>\!\! & 0 & \!\hbox{in $\Omega$},\vspace{3pt}\\
\varphi_1 & \!\!=\!\! & 0 & \!\hbox{on $\partial\Omega$}.\eaa\right.
\ee
By the Krein-Rutman theorem, $\varphi_1$ is unique up to multiplication by positive constants, and the Hopf lemma asserts that $\nu\cdot\nabla\varphi_1<0$ on $\partial\Omega$, where $\nu$ is the outward unit normal to~$\Omega$. We denote $\lambda_1<\lambda_2\le\lambda_3\le\cdots$ the sequence of eigenvalues of $-\Delta$ with Dirichlet boundary conditions on $\partial\Omega$, see e.g.~\cite{HB}.

We now consider some nonlinear perturbations of the equation~\eqref{varphi1}, of the type
\be\label{eq}\left\{\baa{rcll}
\Delta u+(\lambda_1+\epsilon)u-\delta g(x,u,\nabla u) & \!\!=\!\! & 0 & \!\hbox{in $\Omega$},\vspace{3pt}\\
u & \!\!>\!\! & 0 & \!\hbox{in $\Omega$},\vspace{3pt}\\
u & \!\!=\!\! & 0 & \!\hbox{on $\partial\Omega$},\eaa\right.
\ee
where $\epsilon$ and $\delta$ are two real parameters, and $\epsilon$ is either negative, or positive and small. The real-valued function $g:(x,s,p)\mapsto g(x,s,p)$ defined in $\overline{\Omega}\times[0,+\infty)\times\R^N$ is assumed to satisfy
\be\label{hypg}\!\!\!\left\{\baa{l}
g\hbox{ is H\"older-continuous in $\overline{\Omega}\times[0,A]\times[-A,A]^N$ for every $A>0$},\vspace{3pt}\\
\!\forall\,(x,p)\!\in\!\overline{\Omega}\!\times\!\R^N,\ g(x,\!\cdot,p)\!>\!0\hbox{ in $\!(0,+\infty)$},\ g(x,\!\cdot,p)\hbox{ is non-decreasing in $\![0,+\infty)$},\vspace{3pt}\\
\!\displaystyle\frac{g(x,s,p)}{s}\mathop{\longrightarrow}_{s\to0^+}+\infty\ \hbox{ and }\ \frac{g(x,s,p)}{s}\mathop{\longrightarrow}_{s\to+\infty}0\ \hbox{ uniformly in $(x,p)\in\overline{\Omega}\times\R^N$}.\eaa\right.
\ee
From the elliptic regularity theory and bootstrap arguments, any solution $u$ of~\eqref{eq}, understood first in the weak $H^1_0(\Omega)$ sense, is actually in $H^2(\Omega)$ and then in $W^{2,q}(\Omega)$ for every $1\le q<\infty$, whence in $C^{1,\beta}(\overline{\Omega})$ for every $\beta\in(0,1)$. It is also in $C^{2,\alpha}_{loc}(\Omega)$, for $\alpha\in(0,1)$ such that $x\mapsto g(x,u(x),\nabla u(x))$ is of class $C^{0,\alpha}(\overline{\Omega})$. In particular, the solutions are then understood as classical solutions throughout the paper. Typical examples of functions $g:\overline{\Omega}\to[0,+\infty)\times\R^N\to\R$ satisfying~\eqref{hypg} are given by
\begin{itemize}
\item $g:(x,s,p)\mapsto g(x,s,p)$ bounded, locally H\"older-continuous in $\overline{\Omega}\times[0,+\infty)\times\R^N$, non-decreasing in $s$, and $\inf_{\overline{\Omega}\times[0,+\infty)\times\R^N}g>0$;
\item $g(x,s,p)=a(x,s,p)s^r+b(x,s,p)$ with $0<r<1$, $a$ and $b$ bounded, locally H\"older-continuous in $\overline{\Omega}\times[0,+\infty)\times\R^N$, non-decreasing in $s$, $b$ non-negative, and $\inf_{\overline{\Omega}\times[0,+\infty)\times\R^N}a>0$;
\item $g(x,s,p)=a(x,s,p)\ln(1+b(x,s,p)s^r)$ with $0<r<1$, $a$ and $b$ bounded, locally H\"older-continuous in $\overline{\Omega}\times[0,+\infty)\times\R^N$, non-decreasing in $s$, and $\inf_{\overline{\Omega}\times[0,+\infty)\times\R^N}a>0$, $\inf_{\overline{\Omega}\times[0,+\infty)\times\R^N}b>0$;
\item $g(x,s,p)=\ln(1+a(x,s,p)s^r)+b(x,s,p)$ with $r>0$, $a$ and $b$ bounded, locally H\"older-continuous in $\overline{\Omega}\times[0,+\infty)\times\R^N$, non-decreasing in $s$, $a$ non-negative, and $\inf_{\overline{\Omega}\times[0,+\infty)\times\R^N}b>0$ (this case actually contains the first one, by choosing $a\equiv0$),
\end{itemize}
or any linear combination with positive coefficients of any number of functions of the above types.

Observe that, if $u$ solves~\eqref{eq}, then
\be\label{intepsdelta}
\epsilon\int_\Omega u(x)\,\varphi_1(x)\,dx=\delta\int_\Omega g(x,u(x),\nabla u(x))\,\varphi_1(x)\,dx.
\ee
Since both integrals in the above formula are positive, one necessarily has
$$\mathrm{sign}(\epsilon)=\mathrm{sign}(\delta),$$
where, for a real number $x$, $\mathrm{sign}(x):=x/|x|$ if $x\neq0$ and $\mathrm{sign}(0):=0$. When $\epsilon=\delta=0$, equation~\eqref{eq} amounts to~\eqref{varphi1}, and the solutions are the positive multiples of any given principal eigenfunction $\varphi_1$ of~\eqref{varphi1}. For~\eqref{varphi1} again, it is well-known since the seminal paper of Brascamp and Lieb~\cite{bl} that, if $\Omega$ is convex, then $\varphi_1$ is log-concave, that is, $\log\varphi_1$ is concave (see further comments on the literature after Theorem~\ref{th1}).

Our main result shows that~\eqref{eq} admits solutions when $\mathrm{sign}(\delta)=\mathrm{sign}(\epsilon)$ and when~$\epsilon$ is either negative, or positive and small (with $\delta$ and $\epsilon$ of the same order in the latter case), that all solutions are close to a principal eigenfunction $\varphi_1$ of~\eqref{varphi1} when $\epsilon$ and $\delta$ are small in absolute values and of the same order, and that these solutions are log-concave when $\Omega$ is smooth and strictly convex (in the sense that all principal curvatures on $\partial\Omega$ are positive).

\begin{theorem}\label{th1}
Let $g$ satisfy~\eqref{hypg}, and let $\varphi_1$ be a given principal eigenfunction of~\eqref{varphi1}. The following properties hold:
\begin{enumerate}
\item [(i)] for every $\epsilon<0$ and $\delta<0$, problem~\eqref{eq} has at least one solution; furthermore, for each $A\ge1$, there is $\epsilon_0>0$ such that, for every $\epsilon\in(0,\epsilon_0)$ and $A^{-1}\epsilon\le\delta\le A\epsilon$, problem~\eqref{eq} has at least one solution;
\item [(ii)] if the function $(x,s,p)\mapsto g(x,s,p)=g(x,s)$ does not depend on $p$ and
\be\label{hypg2}
\forall\,x\in\Omega,\ \ \ s\mapsto\frac{g(x,s)}{s}\ \hbox{ is decreasing in $(0,+\infty)$},
\ee
and if there is $c>0$ such that $\delta\sim c\,\epsilon$ as $\dsp\epsilon{\mathop{\to}^{\neq}}0$, then there is a constant $B=B(\Omega,g,c,\varphi_1)>0$ such that $u_{\epsilon,\delta}\to B\varphi_1$ in $C^1(\overline{\Omega})$ as $\dsp\epsilon\mathop{\to}^{\neq}0$, for all solutions $u_{\epsilon,\delta}$ of~\eqref{eq};
\item [(iii)] if $\partial\Omega$ is of class $C^{2,\alpha}$ for some $\alpha\in(0,1)$ and if $\Omega$ is strictly convex, then, for each $A\ge1$, there is $\epsilon'_0>0$ such that all solutions of~\eqref{eq} are log-concave when $|\epsilon|\le\epsilon'_0$, $A^{-1}|\epsilon|\le|\delta|\le A|\epsilon|$ and $\mathrm{sign}(\delta)=\mathrm{sign}(\epsilon)$.
\end{enumerate}
\end{theorem}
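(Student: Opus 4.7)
The three statements are all driven by the integral identity~(\ref{intepsdelta}) together with the two-sided behaviour of $s\mapsto g(x,s,p)/s$ imposed by~(\ref{hypg}). I would prove (i) by a sub-/supersolution construction: $\underline u=\alpha\varphi_1$ is a subsolution for $\alpha>0$ small (since $-\delta g(x,\alpha\varphi_1,\alpha\nabla\varphi_1)\ge\alpha|\epsilon|\varphi_1$ reduces to $g/(\alpha\varphi_1)\ge|\epsilon|/|\delta|$, which follows from the blow-up of $g/s$ at $0^+$) and $\overline u=M\varphi_1$ is a supersolution for $M$ large (by the vanishing of $g/s$ at $+\infty$); Schauder's fixed point theorem applied in the resulting order interval then produces a solution. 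The case $\epsilon>0$, $\delta\asymp\epsilon$, is handled by the same scheme built around the quantified anti-maximum principle established earlier in the paper. Part (ii) follows from compactness: the identity~(\ref{intepsdelta}) forbids $\|u_{\epsilon,\delta}\|_\infty$ from collapsing to $0$ or blowing up, elliptic regularity yields subsequential $C^1(\overline\Omega)\cap C^{2,\alpha}_{loc}(\Omega)$ limits of the form $B\varphi_1$ with $B>0$, and passing to the limit in~(\ref{intepsdelta}) uses the strict monotonicity of $B\mapsto\int_\Omega(g(x,B\varphi_1)/(B\varphi_1))\,\varphi_1^2$ coming from~(\ref{hypg2}) to single out a unique $B$, killing the dependence on the subsequence.

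\textbf{Part (iii): reduction.} I would argue by contradiction: assume there is a sequence $(\epsilon_n,\delta_n,u_n)$ with $u_n$ not log-concave and $|\epsilon_n|\to 0$ under the stated constraints on $\delta_n$. The same a priori bounds as in (ii) apply even when $g$ depends on $p$; combined with the now $C^{2,\alpha}$ boundary and the fact that $g(\cdot,u_n,\nabla u_n)$ is uniformly bounded in $C^{0,\alpha}(\overline\Omega)$, elliptic estimates up to the boundary give a uniform $C^{2,\alpha}(\overline\Omega)$ bound on $u_n$. Along a subsequence $u_n\to B\varphi_1$ in $C^2(\overline\Omega)$ for some $B>0$. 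Uniqueness of $B$ is not needed here, as any cluster point is enough to produce a contradiction.

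\textbf{Part (iii): interior vs.\ boundary layer.} The key input is the \emph{quantitative} strict log-concavity of $\varphi_1$: by the Brascamp--Lieb theorem $\log\varphi_1$ is concave, and an interior compactness argument upgrades this to $D^2\log\varphi_1\le-c_0 I$ on each compact $K\subset\Omega$ for some $c_0=c_0(K)>0$; $C^2$ convergence then transfers this into $D^2\log u_n\le-\tfrac{c_0}{2}I$ on $K$ for $n$ large. In the boundary layer $\{x\in\Omega:\dist(x,\partial\Omega)<\eta\}$ I would write $\log u_n=\log(u_n/d)+\log d$ with $d=\dist(\cdot,\partial\Omega)$. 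The Hopf lemma combined with the $C^2(\overline\Omega)$ convergence $u_n\to B\varphi_1$ makes $u_n/d$ a strictly positive $C^{1,\alpha}(\overline\Omega)$ function with uniform upper and lower bounds independent of $n$, so $D^2\log(u_n/d)$ stays bounded. Meanwhile, on a strictly convex $C^{2,\alpha}$ domain, $d$ is $C^{2,\alpha}$ in a tubular neighbourhood of $\partial\Omega$, and a Fermi-coordinate computation using the positivity of the second fundamental form gives $D^2\log d$ uniformly negative definite near $\partial\Omega$, with blow-up at least of order $\kappa_{\min}/d$ in tangential directions and $1/d^2$ in the normal direction (here $\kappa_{\min}>0$ is the minimum principal curvature of $\partial\Omega$). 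For $\eta$ small this negative definite blow-up swallows the bounded term $D^2\log(u_n/d)$, yielding $D^2\log u_n<0$ on $\{d<\eta\}$. Covering $\Omega\setminus\{d<\eta\}$ by a compact $K\subset\Omega$ and combining the two estimates produces $D^2\log u_n\le 0$ on the whole of $\Omega$ for $n$ large, contradicting the assumption. The principal technical obstacle is matching the two regimes cleanly: ensuring that the interior constant $c_0(K)$ survives on a compact $K$ reaching far enough into the tubular neighbourhood for the boundary-layer estimate to take over, which is precisely where the $C^{2,\alpha}$ regularity of $\partial\Omega$ and the quantitative strictness of the convexity are both needed.
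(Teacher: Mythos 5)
Your overall skeleton for (i) and (ii) matches the paper's (sub-/supersolutions plus Schauder, then a priori bounds and compactness), but there are two concrete problems.

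\textbf{Part (i), case $\epsilon<0$, $\delta<0$: the supersolution $M\varphi_1$ does not exist.} Being a supersolution requires $g(x,M\varphi_1(x),M\nabla\varphi_1(x))\le(|\epsilon|/|\delta|)\,M\varphi_1(x)$ in $\Omega$. But $\varphi_1$ vanishes on $\partial\Omega$, so near the boundary $s:=M\varphi_1(x)$ is arbitrarily small regardless of $M$, and by~\eqref{hypg} one has $g(x,s,p)/s\to+\infty$ as $s\to0^+$ uniformly. Hence the required inequality fails in a fixed collar of $\partial\Omega$ for every $M$; the decay $g/s\to0$ at $+\infty$ only helps in the interior. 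The paper gets around this by not comparing against a multiple of $\varphi_1$ at all: it uses the global linear bound $g(x,s,p)\le C+|\epsilon|s/(2|\delta|)$ from~\eqref{hypg} and takes as upper barrier the positive solution $\psi$ of $\Delta\psi+(\lambda_1+\epsilon/2)\psi=\delta C$ with Dirichlet data. The constant forcing $\delta C<0$ is precisely what absorbs the bounded part of $g$ near $\partial\Omega$, and the maximum principle then gives $\mathcal{T}_\epsilon(\delta g(\cdot,v,\nabla v))\le\psi$ for all $0\le v\le\psi$. You would need a supersolution of that type, not $M\varphi_1$.

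\textbf{Part (iii): the claimed bound on $D^2\log(u_n/d)$ is unjustified.} Your boundary-layer decomposition $\log u_n=\log(u_n/d)+\log d$ is a genuinely different route from the paper's. Its crux is the assertion that $D^2\log(u_n/d)$ stays bounded near $\partial\Omega$ because $u_n/d$ is uniformly $C^{1,\alpha}(\overline{\Omega})$ with two-sided bounds. That inference is false: a $C^{1,\alpha}$ bound gives no pointwise control on second derivatives. From a uniform $C^{2,\alpha}(\overline{\Omega})$ bound on $u_n$ (which is all the equation gives), the quotient $u_n/d$ is only $C^{1,\alpha}(\overline{\Omega})$, and $D^2(u_n/d)$ can blow up like $d^{\alpha-1}$ as $d\to0^+$ (already in one dimension: $u(t)=t+t^{2+\alpha}$ has $(u/t)''\sim t^{\alpha-1}$). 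The scheme is probably repairable since $D^2\log d$ blows up negatively at the faster rates $-\kappa_{\min}/d$ (tangential) and $-1/d^2$ (normal), but as written the step is a gap, and the careful matching you flag as the ``principal technical obstacle'' is indeed nontrivial. The paper avoids the boundary layer entirely: assuming failure of log-concavity at points $x_n$ in directions $\xi_n$, it passes to a limit $(x,\xi)$ and rules out the three cases pointwise---$x$ interior (Lee--V\'azquez uniform strict negativity of $D^2\log\varphi_1$), $x\in\partial\Omega$ with $\xi\cdot\nu(x)\neq0$ (the term $-(\partial_{\xi_n}u_n)^2/u_n^2\to-\infty$ dominates), and $x\in\partial\Omega$ with $\xi\cdot\nu(x)=0$ (strict convexity gives $\partial_{\xi\xi}\varphi_1(x)<0$, so $\partial_{\xi_n\xi_n}u_n/u_n\to-\infty$). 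That argument only needs $C^2(\overline{\Omega})$ convergence, sidestepping the quotient-regularity issue above. Your outline of part (ii) is correct and matches the paper; just note explicitly that $B>0$ is extracted from the identity~\eqref{intepsdelta}, the blow-up of $g/s$ at $0^+$, and the constraint $A^{-1}|\epsilon|\le|\delta|\le A|\epsilon|$.
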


\subsection*{Comments on concavity properties}

Several comments are in order. Let us start with the concavity properties. Throughout these comments, we assume that $\Omega$ is convex. It is natural to wonder whether the solutions~$u$ of~\eqref{eq} inherit a sort of convexity. Remembering that they are positive in $\Omega$ and vanish on $\partial\Omega$, one could wonder whether they would be concave. But it is well-known~\cite{ka2} that, for instance, the solutions $\varphi_1$ of~\eqref{varphi1} are not concave in general. However, for the solutions $\varphi_1$ of~\eqref{varphi1} or for any solution $u$ of~\eqref{eq} in a convex domain $\Omega$, the upper level $\{x\in\Omega:u(x)>0\}=\Omega$ is convex. What about the other upper level sets
$$\{x\in\Omega:u(x)>\lambda\},$$
for $\lambda>0$~? When they are all convex, the function $u$ is said to be quasi-concave. For instance, if $\Omega$ is an Euclidean ball, if $f:[0,+\infty)\to\R$ is locally Lipschitz-continuous and if $u$ is a solution of
\be\label{eqf}\left\{\baa{rcll}
\Delta u+f(u) & \!\!=\!\! & 0 & \!\hbox{in $\Omega$},\vspace{3pt}\\
u & \!\!>\!\! & 0 & \!\hbox{in $\Omega$},\vspace{3pt}\\
u & \!\!=\!\! & 0 & \!\hbox{on $\partial\Omega$},\eaa\right.
\ee
then $u$ is radially decreasing with respect to the center of the ball~\cite{gnn}, and its upper level sets are then balls, therefore $u$ is immediately quasi-concave~! But as such this does not imply much more, that is, it is not clear that $h(u)$ would be concave for some increasing function $h:(0,+\infty)\to\R$. Actually, even in some convex domains having some symmetries, the quasi-concavity of $u$ is not true in general for~\eqref{eqf}~\cite{hns}, or for more general quasilinear equations~\cite{aags}.

Coming back to the eigenvalue problem~\eqref{varphi1} in a convex domain~$\Omega$, the function $\log\varphi_1$ is concave~\cite{bl}, hence $\varphi_1$ is quasi-concave. The proof in~\cite{bl} is based on Pr\'ekopa-Leindler and Brunn-Minkowski inequalities, leading to the preservation of the log-concavity by the heat flow. Another proof of the quasi-concavity of $\varphi_1$ in dimension $N=2$ based on PDE arguments was given by Acker, Payne and Philippin~\cite{app}, while another proof of the log-concavity of $\varphi_1$ was given by Caffarelli and Spruck~\cite{casp} and Korevaar~\cite{ko1}, based on the maximum principle applied to the concavity function
$$(x,y,\mu)\mapsto \mu v(x)+(1-\mu)v(y)-v(\mu x+(1-\mu)y),$$
defined in $\Omega\times\Omega\times[0,1]$, with $v:=\log\varphi_1$. Furthermore, based on the constant-rank method, Caffarelli and Friedman~\cite{cf} showed that $\log\varphi_1$ is strictly concave if $\Omega$ is strictly convex, in dimension $N=2$. The strict log-concavity in the sense of the uniform strict negativity of the Hessian matrices of $\log\varphi_1$ was shown by Lee and V\'azquez~\cite{lv} in any dimension $N\ge2$. The concavity of $\log\varphi_1$ implies immediately the concavity of the function $-\varphi_1^{-q}$ for every $q>0$. On the other hand, one can wonder whether $\varphi_1$ would be stronger than log-concave, that is, whether there would exist some increasing function $h:(0,+\infty)\to\R$ such that $h(\varphi_1)$ would be concave and the concavity of $h(\varphi_1)$ would imply the concavity of $\log\varphi_1$. To our knowledge, the answer is actually not known. For the Cauchy problem of the heat equation $v_t=\Delta v$ with Dirichlet boundary condition on $\partial\Omega$, still assuming that the domain $\Omega$ is convex, the log-concavity is preserved, that is, if the initial condition $v_0:=v(0,\cdot)$ is bounded, positive and log-concave in $\Omega$, then so is $v(t,\cdot)$ for every $t>0$~\cite{bl}. Since there is $c>0$ such that
$$e^{\lambda_1t}v(t,\cdot)\mathop{\longrightarrow}_{t\to+\infty}c\,\varphi_1\ \hbox{ uniformly in $\overline{\Omega}$}$$
and since the log-concavity is preserved by multiplication by positive scalars, the log-concavity of $\varphi_1$ follows. It turns out that, assuming without loss of generality that $0<v_0\le1$ in $\Omega$, if $-(-\log v_0)^q$ is concave for some $q\in[1/2,1]$, then $-(-\log v(t,\cdot))^q$ remains concave for every $t>0$, and $q=1/2$ is optimal, as shown by Ishige, Salani and Takatsu~\cite{ist} (and the log-concavity, i.e. with $q=1$, is in some sense the weakest concavity preserved by the heat flow in any dimension~\cite{ist2}, while the quasi-concavity is preserved by the heat flow only in dimension $N=1$~\cite{is,ist2}). Since the concavity of $-(-\log u)^q$ is not preserved by multiplication of $u$ by positive constants when $q\neq1$, the above result does not imply at once that $-(-\log(\varphi_1/\|\varphi_1\|_\infty))^q$ would be concave for $q<1$, though this property is conjectured for $q\in(1/2,1)$ , and immediately true for $q>1$.

For the semilinear equation~\eqref{eqf}, the solutions $u$ are log-concave provided that $f$ is~of class $C^1([0,+\infty))$ and the functions
$$s\mapsto\frac{f(s)}{s}\ \hbox{ and }\ s\mapsto f'(s)-\frac{f(s)}{s}$$
are non-increasing in $(0,+\infty)$~\cite{ko1,li}. Notice that, for~\eqref{eq} with $\epsilon$ small but not signed and $g(x,s,p)=g(s)$, the above conditions would imply that $g(s)=as$ for some $a\in\R$, that is, problem~\eqref{eq} would be linear and the positive solutions of~\eqref{eq} would necessarily be multiples of $\varphi_1$. Lions' method~\cite{li} is based on the preservation of the log-concavity by the heat flow~\cite{bl},\footnote{The log-concavity is preserved for more general parabolic equations $v_t=\Delta v+f(v)$, based on maximum principles for the concavity function of the logarithm~\cite{gk,ko1}, but it is not for linear parabolic equations with non-constant second-order coefficients~\cite{kol}.} on the Trotter formula, and on the convergence of the positive solutions of the corresponding Cauchy problems to the solution of~\eqref{eqf}, which is unique and exists under the additional assumptions $\lim_{s\to0^+}f(s)/s>\lambda_1$ and $f(s)\le0$ for large $s$, by~\cite{b}. The log-concavity is also known for~\eqref{eqf} with $f(s)=s\log s^2$~\cite{gms}, for the principal eigenfunctions of elliptic operators with drift terms~\cite{c,cfls}, for $p$-Laplace equations~\cite{bms,cqs,sak}, for equations with $p$-normalized Laplace operators~\cite{ku}, or for the Laplace operator with Robin boundary conditions and large Robin parameter~\cite{crfr}. For the positive solutions~$u$ of equations of the type
$$\mathrm{div}(A(x)\nabla u)+a(x)u-\epsilon h(u)=0$$
with an non-decreasing function $h\in C^1((0,+\infty))$, Almousa, Bucur, Cornale and Squassina~\cite{abcs} showed that $\log u$ is $O(\epsilon)$-close in $L^\infty(\Omega)$ to a concave function as $\epsilon\to0^+$, when $A$ and $a$ have gradients of order $O(\epsilon)$. The method relies on estimates of the concavity functions associated to $\log u$, for possible positive solutions $u$. Under different assumptions on the coefficients of the equation, Theorem~\ref{th1} shows the existence of positive solutions for~\eqref{eq}, and exact log-concavity properties when $|\epsilon|$ and $|\delta|$ are small. The method used here is also different from~\cite{abcs} since, in order to show the log-concavity of the solutions in part~$(iii)$ of Theorem~\ref{th1}, we show, through a priori estimates and uniform lower and upper bounds in the limit $(\epsilon,\delta)\to(0,0)$, that the solutions actually converge in $C^2(\overline{\Omega})$ to positive multiples of $\varphi_1$, and we invoke the strict log-concavity of $\varphi_1$~\cite{lv} to conclude. We add that the strict log-concavity implies in particular that the upper level sets of the solutions are strictly convex and that the solutions have exactly one critical point, their maximum (we refer to~\cite{gro} for a survey on the number of critical points of semilinear elliptic equations).

Let us finally mention other results on the quasi-concavity of solutions $u$ of equations of the type~\eqref{eqf} in bounded convex domains $\Omega$. When $f$ is a positive constant, the celebrated result of Makar-Limanov~\cite{ml} states that, in dimension $N=2$, $\sqrt{u}$ is concave, hence $u$ is log-concave and quasi-concave. More generally speaking, in any dimension $N\ge2$, when $f(s)=\mu\,s^p$ with $p\in[0,1)$ and $\mu>0$, then any solution $u$ of~\eqref{eqf} is such that $u^{(1-p)/2}$ is concave~\cite{k,ke1}, and even strictly concave if $\Omega$ is strictly convex~\cite{lv}. In addition to the previous references, further results on the concavity of $h(u)$, for some function $h:(0,+\infty)\to\R$, have been obtained under various assumptions on $f$, see e.g.~\cite{abcs,casp,gp,ka3,ka4,ko1} with arguments based on the maximum principle for a certain concavity function (in addition to~\cite{bms,sak} for $p$-Laplace equations), or~\cite{cf,gms,kl,l,st,x} with arguments based on properties of the rank of the Hessian matrices of the solutions or of functions of them. Other quasi-concavity results for the solutions of some equations~\eqref{eqf} have been obtained in~\cite{f,gg,ka2}.
 
\subsection*{Comments of the existence results, and the link with maximum and anti-maximum principles}

In these comments, $\Omega$ is only assumed to be a bounded domain with $C^{1,1}$ boundary, but it is not assumed to be convex. First of all, we refer to the celebrated paper of Ambrosetti, Brezis and Cerami~\cite{abc} for some existence results of solutions of equations of the type~\eqref{eqf}. For equation~\eqref{eq}, the existence and convergence properties strongly depend on the signs of the parameters $\epsilon$ and $\delta$ (we recall that they are necessarily of the same sign). Consider first the easier case when $\epsilon$ and $\delta$ are {\it negative}. In that case, on the one hand, from~\eqref{hypg}, the function~$\sigma\varphi_1$ is a sub-solution of~\eqref{eq} for all $\sigma>0$ small enough. On the other hand, for a certain well-chosen $a>0$, the solution $\psi$ of $\Delta\psi+(\lambda_1+\epsilon/2)\psi=\delta a$ in $\Omega$ with Dirichlet boundary conditions, will be a super-solution of~\eqref{eq} such that $\psi\ge\sigma\varphi_1$ for all $\sigma>0$ small enough (see Section~\ref{sec3} for more details). Now, if the function $g(x,s,p)$ in~\eqref{hypg} were independent of $p$, then these sub- and super-solutions would immediately lead to the existence of a solution $u$ of~\eqref{eq} between them, from a standard iteration process based on the maximum principle~\cite{a,d,pw,sa}. In the general case of equation~\eqref{eq} with an $(x,s,p)$-dependent function $g(x,s,p)$ satisfying~\eqref{hypg}, the above-described strategy for the existence of a solution will be adapted, especially with the help of the Schauder fixed point theorem (the proof presented here provides an alternate proof for~\eqref{eq} of a more general result of De Coster and Henrard~\cite[Theorem~3.1]{dch} based on the topological degree). In order to get part~$(ii)$ of Theorem~\ref{th1} and the convergence of the solutions to positive multiples of $\varphi_1$ as $(\epsilon,\delta)\to(0^-,0^-)$ with~$\epsilon$ and~$\delta$ of the same order, some a priori integral and pointwise bounds are derived. 

The situation is radically different when $\epsilon$ and $\delta$ are {\it positive}. Now, due to~\eqref{hypg}, $\sigma\varphi_1$ is a super-solution of~\eqref{eq} for every $\sigma>0$ small enough, while the constant function~$M$ is a sub-solution for every $M>0$ large enough. So the standard method of sub- and super-solution is not applicable at once. We use the Schauder fixed point argument (actually as in the case where $\epsilon$ and $\delta$ are negative), but instead of making use of the maximum principle when $\epsilon<0$, the construction of a convex set that is stable by a compact map now relies on a quantified version of the anti-maximum principle, when $\epsilon>0$ is small enough. The standard anti-maximum principle of Cl\'ement and Peletier~\cite{cp} (see also~\cite{sh}) asserts that, if $w\in L^p(\Omega)$ with $p\in(N,+\infty)$ and $w>0$ in~$\Omega$, then there is $\epsilon_w>0$ such that, for any $\epsilon\in(0,\epsilon_w)$, the solution $\mathcal{T}_\epsilon(w)$ of
\be\label{Teps}\left\{\baa{rcll}
\Delta\mathcal{T}_\epsilon(w)+(\lambda_1+\epsilon)\mathcal{T}_\epsilon(w) & = & w & \hbox{in }\Omega,\vspace{3pt}\\
\mathcal{T}_\epsilon(w) & = & 0 & \hbox{on }\partial\Omega,\eaa\right.
\ee
satisfies $\mathcal{T}_\epsilon(w)>0$ in $\Omega$. The function $\mathcal{T}_\epsilon(w)$ is the $W^{2,p}(\Omega)\cap W^{1,p}_0(\Omega)$ solution of this problem, it exists and is unique if $0<\epsilon<\lambda_2-\lambda_1$ (and also if $\epsilon<0$ or more generally if $\lambda_1+\epsilon$ is different from the eigenvalues $(\lambda_n)_{n\ge1}$~\cite{HB}). The anti-maximum principle also holds for the Laplacian with an indefinite weight having non-trivial positive part in factor of~$\lambda_1$ and with $w$ such that $\int_\Omega w\varphi_1>0$ (in particular,~$w$ may not be positive everywhere in~$\Omega$) as shown by Arcoya and G\'amez~\cite{ag} (see also Fleckinger, Hern\'andez and De Th\'elin~\cite{fht}). The anti-maximum principle is still valid for more general operators~\cite{cp,p,t}, including $p$-Laplacian operators~\cite{bt,fgtt}, for Steklov problems~\cite{apr,apr2}, and, as shown by Birindelli~\cite{bi}, it also holds in less regular domains $\Omega$ with $\mathcal{T}_\epsilon(w)$ and $\varphi_1$ satisfying the boundary condition in the weaker sense of Berestycki, Nirenberg and Varadhan~\cite{bnv}. Furthermore, the inequality $p>N$ is actually sharp~\cite{sw}, and the size $\epsilon_w>0$ of the interval of validity of the anti-maximum principle is not uniform with respect to $w$~\cite{bt,cp} (it is nevertheless uniform in dimension $N=1$ under Neumann boundary conditions~\cite{cp}, or under a more general assumption for higher-order operators~\cite{cs}, periodic operators~\cite{cmo}, or $p$-Laplacian operators in dimension~$N=1$ with Neumann boundary conditions~\cite{r}).

To make our argument work in the limit as $\epsilon\to0^+$, we prove a quantified version of the anti-maximum principle when the right-hand sides of~\eqref{Teps} have integrals against $\varphi_1$ of order $\epsilon$. To do so, for a given $\varphi_1$ solving~\eqref{varphi1}, for any two real numbers $\theta\le\tau$, we define the set
$$E_{\theta,\tau}:=\Big\{w\in L^1(\Omega):\theta\le\int_\Omega w\varphi_1\le\tau\Big\}.$$

\begin{pro}\label{pro2}
Let $p\in(N,+\infty)$, $M\in(0,+\infty)$, and let $\varphi_1$ be the solution of~\eqref{varphi1} such that $\|\varphi_1\|_{L^2(\Omega)}=1$. Then, for every $0<a\le b<+\infty$,
$$a\le\liminf_{(\epsilon,\eta)\to(0^+,0^+)}\Big[\inf_{w\in E_{\epsilon a,\epsilon b},\,\|w\|_{L^1(\Omega)}\le\eta,\,\|w\|_{L^p(\Omega)}\le M}\Big(\inf_{\Omega}\frac{\mathcal{T}_\epsilon(w)}{\varphi_1}\Big)\Big]$$
and
$$\limsup_{(\epsilon,\eta)\to(0^+,0^+)}\Big[\sup_{w\in E_{\epsilon a,\epsilon b},\,\|w\|_{L^1(\Omega)}\le\eta,\,\|w\|_{L^p(\Omega)}\le M}\Big(\sup_{\Omega}\frac{\mathcal{T}_\epsilon(w)}{\varphi_1}\Big)\Big]\le b.$$
\end{pro}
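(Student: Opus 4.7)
Set $u:=\mathcal{T}_\epsilon(w)$ and test the equation $\Delta u+(\lambda_1+\epsilon)u=w$ against $\varphi_1$: two integrations by parts, the Dirichlet condition, and $\Delta\varphi_1+\lambda_1\varphi_1=0$ yield the identity
$$\epsilon\int_\Omega u\,\varphi_1=\int_\Omega w\,\varphi_1,$$
so the $L^2$-projection coefficient $c:=\int_\Omega u\,\varphi_1$ (recall $\|\varphi_1\|_{L^2(\Omega)}=1$) satisfies $c\in[a,b]$ whenever $w\in E_{\epsilon a,\epsilon b}$. Decomposing $u=c\,\varphi_1+u^\perp$ with $\int_\Omega u^\perp\varphi_1=0$ and $u^\perp=0$ on $\partial\Omega$, the remainder $u^\perp$ solves $\Delta u^\perp+(\lambda_1+\epsilon)u^\perp=\tilde{w}$ with $\tilde{w}:=w-\epsilon c\,\varphi_1$ and $\int_\Omega\tilde{w}\,\varphi_1=0$. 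The whole problem reduces to bounding $\|u^\perp/\varphi_1\|_{L^\infty(\Omega)}$ uniformly by a quantity tending to $0$ as $(\epsilon,\eta)\to(0^+,0^+)$, for then $\mathcal{T}_\epsilon(w)/\varphi_1=c+u^\perp/\varphi_1$ would be squeezed into an arbitrarily small neighborhood of $[a,b]$ uniformly in~$w$.

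\medskip

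The hard step, and in my view the only non-routine one, is a \emph{uniform} coercivity estimate for $\mathcal{T}_\epsilon$ on the orthogonal complement of $\varphi_1$: fix any exponent $r$ with $N<r<p$; I would prove the existence of $\epsilon_\star\in(0,\lambda_2-\lambda_1)$ and $K=K(\Omega,r)$ such that, for every $\epsilon\in(0,\epsilon_\star]$ and every $h\in L^r(\Omega)$ with $\int_\Omega h\,\varphi_1=0$, the unique $W^{2,r}(\Omega)\cap W^{1,r}_0(\Omega)$ solution $z$ of $\Delta z+(\lambda_1+\epsilon)z=h$ satisfies $\|z\|_{W^{2,r}(\Omega)}\le K\,\|h\|_{L^r(\Omega)}$. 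Such a bound cannot be read off from $\mathcal{T}_\epsilon$'s blow-up on $\mathrm{span}(\varphi_1)$, so I would argue by contradiction: normalise a bad sequence so that $\|z_n\|_{L^r(\Omega)}=1$ while $\|h_n\|_{L^r(\Omega)}\to 0$ and $\epsilon_n\to 0^+$, then use the Calder\'on-Zygmund inequality $\|z_n\|_{W^{2,r}(\Omega)}\le C(\|\Delta z_n\|_{L^r(\Omega)}+\|z_n\|_{L^r(\Omega)})$ combined with $\Delta z_n=h_n-(\lambda_1+\epsilon_n)z_n$ to obtain a $W^{2,r}(\Omega)$-bounded subsequence. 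Rellich's theorem gives strong convergence in $L^r(\Omega)$ to some $z_\infty$ with $\|z_\infty\|_{L^r(\Omega)}=1$, $\int_\Omega z_\infty\varphi_1=0$ (this orthogonality passes to the limit thanks to the same test against $\varphi_1$: $\epsilon_n\int_\Omega z_n\varphi_1=\int_\Omega h_n\varphi_1=0$), and $\Delta z_\infty+\lambda_1 z_\infty=0$ in $\Omega$ with $z_\infty=0$ on $\partial\Omega$; by the simplicity of $\lambda_1$, one must have $z_\infty\in\mathrm{span}(\varphi_1)\cap\varphi_1^\perp=\{0\}$, a contradiction.

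\medskip

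With the uniform estimate in hand, the finish is short. Applied to $h=\tilde{w}$, it gives $\|u^\perp\|_{W^{2,r}(\Omega)}\le K\,\|\tilde{w}\|_{L^r(\Omega)}$; since $r>N$, the Morrey embedding $W^{2,r}(\Omega)\hookrightarrow C^{1,1-N/r}(\overline{\Omega})$ and the Hopf bound $\varphi_1(x)\ge c_\Omega\,\dist(x,\partial\Omega)$, together with $u^\perp=0$ on $\partial\Omega$, yield
$$\Big\|\frac{u^\perp}{\varphi_1}\Big\|_{L^\infty(\Omega)}\le C\,\|u^\perp\|_{C^1(\overline{\Omega})}\le C'\,\|\tilde{w}\|_{L^r(\Omega)}.$$
Finally, the classical $L^1$-$L^p$ interpolation $\|w\|_{L^r(\Omega)}\le\|w\|_{L^1(\Omega)}^{\theta}\|w\|_{L^p(\Omega)}^{1-\theta}$, for $\theta:=(p-r)/(r(p-1))\in(0,1)$, shows that on the admissible class one has $\|w\|_{L^r(\Omega)}\le\eta^\theta M^{1-\theta}$, whence
$$\|\tilde{w}\|_{L^r(\Omega)}\le\eta^\theta M^{1-\theta}+\epsilon\,b\,\|\varphi_1\|_{L^r(\Omega)}\mathop{\longrightarrow}_{(\epsilon,\eta)\to(0^+,0^+)}0$$
uniformly in $w$.

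\medskip

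Putting the three ingredients together, $\mathcal{T}_\epsilon(w)/\varphi_1=c+u^\perp/\varphi_1$ with $c\in[a,b]$ and $\|u^\perp/\varphi_1\|_{L^\infty(\Omega)}=o(1)$ uniformly in the class, which immediately delivers $\liminf\inf_\Omega\mathcal{T}_\epsilon(w)/\varphi_1\ge a$ and $\limsup\sup_\Omega\mathcal{T}_\epsilon(w)/\varphi_1\le b$. The uniform coercivity on $\varphi_1^\perp$ is the crux; everything else is routine elliptic regularity and interpolation.
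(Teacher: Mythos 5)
Your proof is correct and, while it rests on the same two pillars as the paper's argument (testing against $\varphi_1$ to pin $\int_\Omega\mathcal{T}_\epsilon(w)\varphi_1\in[a,b]$, and exploiting the spectral gap over $\varphi_1^\perp$), it organizes them quite differently. The paper works with an arbitrary admissible sequence $(\epsilon_n,\eta_n,w_n)$: the explicit spectral-gap inequality $\int_\Omega|\nabla\xi|^2\ge\lambda_2\int_\Omega\xi^2$ for $\xi\perp\varphi_1$ gives an $L^{p'}$ bound on $v_n-\theta_n\varphi_1$ with explicit constant, one bootstraps to $W^{2,p}$ boundedness, extracts a $C^1$-convergent subsequence, and only then invokes $\|w_n\|_{L^1}\to0$ to identify the limit as a multiple $c\varphi_1$, $c\in[a,b]$, with the Hopf lemma converting $C^1$ convergence into uniform control of the ratio $v_n/\varphi_1$. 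You instead isolate a stand-alone uniform resolvent estimate $\|z\|_{W^{2,r}(\Omega)}\le K\|h\|_{L^r(\Omega)}$ on $\varphi_1^\perp$ for $\epsilon\in(0,\epsilon_\star]$ (proved by the same contradiction--compactness scheme that the paper applies directly to the original sequence), then feed in the hypothesis $\|w\|_{L^1}\le\eta$ through the $L^1$--$L^p$ interpolation $\|w\|_{L^r}\le\eta^\theta M^{1-\theta}$, and close via Morrey and Hopf to get the genuinely \emph{quantitative} conclusion $\|\mathcal{T}_\epsilon(w)/\varphi_1-c\|_{L^\infty(\Omega)}\le C'\big(\eta^\theta M^{1-\theta}+\epsilon b\|\varphi_1\|_{L^r(\Omega)}\big)$ uniformly over the admissible class, with no subsequence extraction in the main body. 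Your route buys modularity (the coercivity lemma is reusable, e.g.\ for Proposition~2.3) and an explicit rate in $(\epsilon,\eta)$; the paper's route keeps the spectral-gap constant explicit at the $L^2/L^{p'}$ level but is ultimately soft, deferring the exploitation of $\|w\|_{L^1}\le\eta$ to the identification of the $C^1$-limit. One small presentational remark: in the contradiction step you normalize $\|z_n\|_{L^r}=1$ and need to justify that $\|z_n\|_{W^{2,r}}$ stays bounded before you can extract the Rellich-compact subsequence; this follows by combining Calder\'on--Zygmund with the defining inequality $\|z_n\|_{W^{2,r}}>n\|h_n\|_{L^r}$ to force $\|h_n\|_{L^r}\to0$ first, but it is worth spelling out, since the naive ``bad sequence'' normalization $\|z_n\|_{W^{2,r}}=1$ would make the argument run more transparently.
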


In particular, for a single fixed function $f\in L^p(\Omega)$ with $p>N$ and $a:=\int_\Omega f\varphi_1>0$, then $\epsilon f\in E_{\epsilon a,\epsilon a}$ and, for every $\epsilon>0$ such that $\epsilon\|f\|_{L^p(\Omega)}\le1$,
\be\label{ineq1}
\epsilon\,\inf_\Omega\frac{\mathcal{T}_\epsilon(f)}{\varphi_1}=\inf_\Omega\frac{\mathcal{T}_\epsilon(\epsilon f)}{\varphi_1}\ge\inf_{w\in E_{\epsilon a,\epsilon a},\,\|w\|_{L^1(\Omega)}\le\epsilon\|f\|_{L^1(\Omega)},\,\|w\|_{L^p(\Omega)}\le 1}\Big(\inf_{\Omega}\frac{\mathcal{T}_\epsilon(w)}{\varphi_1}\Big),
\ee
whence Proposition~\ref{pro2} implies that
$$\inf_\Omega\frac{\mathcal{T}_\epsilon(f)}{\varphi_1}\to+\infty\ \hbox{ as }\epsilon\to0^+.$$
This last property could also be viewed as consequences of some results of~\cite{cp}, or of~\cite{kor} when $f>0$ with a method based on the implicit function theorem, and or of a result of Hess~\cite{h}, which holds for more general operators with indefinite weight and functions $f$ positive and continuous in $\overline{\Omega}$. As a matter of fact, the right-hand side of~\eqref{ineq1} converges to $a$ as $\epsilon\to0^+$ from Proposition~\ref{pro2}, and a similar upper bound finally entails that
$$\epsilon\,\frac{\mathcal{T}_\epsilon(f)}{\varphi_1}\to\int_\Omega f\,\varphi_1\ \hbox{ uniformly in $\Omega$ as $\epsilon\to0^+$}.$$

Proposition~\ref{pro2} is shown through a spectral decomposition and some a priori estimates. For the proof of part~$(i)$ of Theorem~\ref{th1} with $\epsilon>0$ and $\delta>0$ small enough and such that $A^{-1}\epsilon\le\delta\le A\epsilon$, the Schauder fixed point theorem can be applied in the set~$\mathcal{C}$ of $C^1(\overline{\Omega})$ functions $v$ such that $\sigma\varphi_1\le v\le\beta$, for some well-chosen $\sigma>0$ and $\beta>0$ depending on $A$. To make sure that $v\mapsto\mathcal{T}_\epsilon(\delta\,g(\cdot,v,\nabla v))$ maps $\mathcal{C}$ compactly into itself, one uses the assumptions on $g$ and Proposition~\ref{pro2}. Thus, the existence of solutions of~\eqref{eq} follows. Then, elliptic estimates and integral bounds lead to the convergence of the solutions $u_{\epsilon,\delta}$ to positive multiples of $\varphi_1$ as $(\epsilon,\delta)\to(0^+,0^+)$ when~$\epsilon$ and~$\delta$ are of the same order.

For~\eqref{eq} with $\delta g(x,u)$ instead of $\delta g(x,u,\nabla u)$, the existence of solutions~$u$ such that $\int_\Omega u\varphi_1\in(a,b)$ with $a<b\in\R$ was shown by Rezende, S\'anchez-Aguilar and Silva~\cite[Theo\-rems~1.1-1.2]{rss}, provided that the integrals $\int_\Omega g(\cdot,a\varphi_1)\varphi_1$ and $\int_\Omega g(\cdot,b\varphi_1)\varphi_1$ have opposite strict sign and that $0<|\epsilon|<\!\!<\delta$. These conditions, which genera\-lize the Landesman-Lazer condition for the solvability of equations $\Delta u+\lambda_1u=h(x,u)$ with Dirichlet boundary condition~\cite{ll}, are actually not satisfied here since the sought solutions are positive and $g>0$ in~$\overline{\Omega}\times(0,+\infty)\times\R^N$ ($g$ could be continuously extended by $g(x,s,p)=g(x,0,p)\ge0$ for all $(x,s,p)\in\overline{\Omega}\times(-\infty,0)\times\R^N$). Nevertheless,~\cite[Theorem~1.5]{rss} (with $\delta g(x,u)$ instead of $\delta g(x,u,\nabla u)$) shows that the conclusions $(i)$-$(ii)$ of Theorem~\ref{th1} do not hold without the condition $A^{-1}|\epsilon|\le|\delta|\le A|\epsilon|$, namely they do not hold when $0<|\epsilon|<\!\!<|\delta|<\!\!<1$. As a matter of fact, independently of~\cite[Theorem~1.5]{rss}, since $g$ is here continuous in~$\overline{\Omega}\times[0,+\infty)\times\R^N$ and positive in $\overline{\Omega}\times(0,+\infty)\times\R^N$, and since formula~\eqref{intepsdelta} holds, the convergence of solutions $u$ of~\eqref{eq} to $B\varphi_1$ in $C^1(\overline{\Omega})$ as $(|\epsilon|,|\delta|)\to(0^+,0^+)$ with $B>0$ necessarily entails that both ratios $|\epsilon|/|\delta|$ and $|\delta|/|\epsilon|$ must be bounded.

\subsection*{Some open problems}

The proof of Theorem~\ref{th1} relies on Proposition~\ref{pro2} and on some compactness arguments, among other things. These arguments provide the existence of positive real numbers~$\epsilon_0$ and~$\epsilon'_0$ for the existence in part~$(i)$ and for the log-concavity in part~$(iii)$. A general question then arises:

\begin{open}
Can one derive some lower bounds on $\epsilon_0$ in part~$(i)$ of Theorem~$\ref{th1}$ and on $\epsilon'_0$ in part~$(iii)$, in terms of $g$, $\Omega$, and $A$~?
\end{open}

Actually, some lower bounds of the interval of validity of the anti-maximum principle are known for other boundary conditions, such as Neumann~\cite{cp,cs} or periodic boundary conditions~\cite{cmo}, while some more explicit $w$-dependent lower estimates on the size of this interval for~\eqref{Teps} are given in~\cite{fht}, and some upper estimates are provided in~\cite{bdi}, even for $p$-Laplacian operators. 

In the case $\delta<0$, the assumption~\eqref{hypg2} guarantees the uniqueness of the solutions~$u$ to~\eqref{eq}, from the papers of Berestycki~\cite{b} and Brezis and Oswald~\cite{bo}. The proof strongly uses the fact that the function $s\mapsto-\delta g(x,s)/s$ is decreasing in $(0,+\infty)$ for each $x\in\Omega$. When $\delta>0$, the same condition cannot hold, since it is incompatible with~\eqref{hypg}, which says that $g(x,s)/s\to+\infty$ as $s\to0^+$ uniformly in $x\in\Omega$. However, part~$(ii)$ of Theorem~\ref{th1} states that, under condition~\eqref{hypg2}, the solutions of~\eqref{eq}, even if they were not unique when~$\epsilon$ and~$\delta$ are positive, converge to a {\it unique} multiple $B$ of a given eigenfunction~$\varphi_1$ as~$\epsilon\to0^{\pm}$ and $\delta\sim c\epsilon$ with $c>0$. A natural question then arises.

\begin{open}
Under the conditions~\eqref{hypg} and~\eqref{hypg2}, is it true that the solutions of~\eqref{eq} are unique when $\epsilon$ and $\delta$ are positive~?
\end{open}

In the proof of part~$(i)$ of Theorem~\ref{th1} in the case $\mathrm{sign}(\epsilon)=\mathrm{sign}(\delta)=1$, thanks to~\eqref{hypg}, some positive super-solutions $\overline{u}$ of~\eqref{eq} and positive sub-solutions $\underline{u}$ of~\eqref{eq} are constructed with the unusual inequality $\overline{u}\le\underline{u}$ in $\overline{\Omega}$. The existence of a solution $u$ of~\eqref{eq} between $\overline{u}$ and $\underline{u}$ is shown by using a priori estimates, Proposition~\ref{pro2}, and the Schauder fixed point theorem. Whereas, for the case of an equation~$\Delta u+f(x,u)=0$ in $\Omega$, the existence of a solution $u$ between $\underline{u}$ and $\overline{u}$ in the case $\underline{u}\le\overline{u}$ is standard~\cite{a}, the case $\overline{u}\le\underline{u}$ does not work as such. Actually, the existence of $u$ such that $\overline{u}\le u\le\underline{u}$ is not true in general~\cite[p. 653]{a}. However, for the equation $u''+f(x,u)=0$ in a bounded interval with Dirichlet boundary conditions, some sufficient conditions on $f(x,s)$, with $f$ concave in $s$ and truly heterogeneous in $x$, lead to the existence of a solution $u$ such that $\overline{u}\le u\le\underline{u}$ in a bounded interval~\cite{kor1}. For the equation~\eqref{eqf} without $x$ dependence, we mention the following open question (this problem, originating from~\cite{sa}, is still open to our knowledge, despite on some results assuming partial or complete unusual ordering between the sub- and super-solutions and leading to the existence of a solution satisfying partial ordering between the sub- and super-solutions~\cite{dchb,dch,go,ho}):

\begin{open}
Given a Lipschitz-continuous function $f:[0,+\infty)\to\R$, and some $C^2(\Omega)\cap C(\overline{\Omega})$ functions $\underline{u}$ and $\overline{u}$ such that $\Delta\overline{u}+f(\overline{u})\le0\le\Delta\underline{u}+f(\underline{u})$ in $\Omega$, $\underline{u}=\overline{u}=0$ on $\partial\Omega$, and $\overline{u}\le\underline{u}$ in $\overline{\Omega}$, is there a solution $u$ to~\eqref{eqf} such that $\overline{u}\le u\le\underline{u}$ in $\overline{\Omega}$, under some general conditions on $f$~?
\end{open}

As in the previous paragraph, in part~$(i)$ of Theorem~\ref{th1} in the case when $\epsilon$ and $\delta$ are negative, without dependence on $\nabla u$ in $g$, there are positive sub- and super-solutions $\underline{u}\le\overline{u}$, whence there exists a solution $u$ in between. The existence of a solution between the sub- and super-solutions could also be derived from the maximum principle applied to the associated parabolic equation starting with a sub- or super-solution as initial condition, and the so-constructed solution to the parabolic equation converges monotonically as $t\to+\infty$ to a semi-stable solution $u$ of~\eqref{eq}. This semi-stable solution is then the minimal or maximal solution between $\underline{u}$ and $\overline{u}$~\cite{abc,sa}. For an equation of the type~\eqref{eqf}, and assuming that~$f$ is of class~$C^1$, the semi-stability of $u$ means that
$$\int_{\Omega}|\nabla\phi|^2-\int_{\Omega}f'(u)\phi^2\ge0$$
for every~$C^{\infty}(\Omega)$ function $\phi$ with compact support included in $\Omega$. Cabr\'e and Chanillo~\cite{cc} showed that, if $\Omega$ is $C^\infty$ and strictly convex in dimension $N=2$, and if $f$ is $C^\infty$ and non-negative, then a semi-stable solution $u$ of~\eqref{eqf} has a unique critical point (its maximum point), which is non-degenerate. Therefore, $u$ is concave in the neighborhood of its maximum point. On the other hand, for some $f$ and some specific stadium-like domains in dimension $N=2$, there are some solutions $u$ of~\eqref{eqf} which are not even quasi-concave~\cite{hns}. It is expected that these non-quasi-concave solutions are not semi-stable. But this is still an open question, which can be formulated as follows.

\begin{open}
Assume that $\Omega$ is convex, in any dimension $N\ge2$, and $f:[0,+\infty)\to\R$ is Lipschitz-continuous. Is it true that, if $u$ is a semi-stable solution of~\eqref{eqf}, then $u$ is quasi-concave~?
\end{open}

Notice that the result is immediately true in dimension $N=1$, or more generally if~$\Omega$ is an Euclidean ball in any dimension $N\ge1$, even without the semi-stability condition, since $u$ is necessarily radially symmetric and decreasing with respect to the center of the ball.


\section{Quantified anti-maximum principle: proof of Proposition~\ref{pro2}}\label{sec2}

Let $p\in(N,+\infty)$, $M\in(0,+\infty)$, $0<a\le b<+\infty$, and $\varphi_1$ solve~\eqref{varphi1} with $\|\varphi_1\|_{L^2(\Omega)}=1$. Let $(\epsilon_n,\eta_n)_{n\in\N}$ be any sequence converging to $(0^+,0^+)$. Without loss of generality, one can assume that
\be\label{hypepsn}
0<\epsilon_n<\frac{\lambda_2-\lambda_1}{2}
\ee
for all $n\in\N$. Let $(w_n)_{n\in\N}$ be a sequence in $L^p(\Omega)$ such that
\be\label{hypwn}
\|w_n\|_{L^p(\Omega)}\le M,\ \ \|w_n\|_{L^1(\Omega)}\le\eta_n\ \hbox{ and }\ \epsilon_na\le\int_\Omega w_n\varphi_1\le\epsilon_nb.
\ee
For each $n\in\N$, let $v_n:=\mathcal{T}_{\epsilon_n}(w_n)$, that is,
\be\label{eqvn}\left\{\baa{rcll}
\Delta v_n+(\lambda_1+\epsilon_n)v_n & = & w_n & \hbox{in }\Omega,\vspace{3pt}\\
v_n & = & 0 & \hbox{on }\partial\Omega.\eaa\right.
\ee
We recall that $v_n\in W^{2,p}(\Omega)\cap W^{1,p}_0(\Omega)$, whence $v_n\in C^1(\overline{\Omega})$. The goal is to compare $v_n$ with multiples of $\varphi_1$ for all $n$ large enough.

First of all, by multiplying~\eqref{eqvn} by $\varphi_1$ and integrating by parts together with~\eqref{varphi1}, one gets that
$$\epsilon_n\int_\Omega v_n\varphi_1=\int_\Omega w_n\varphi_1=:\tau_n,$$
whence
\be\label{intvn}
a\le\theta_n:=\int_\Omega v_n\varphi_1\le b
\ee
for all $n\in\N$. Secondly, thanks to~\eqref{varphi1} again, problem~\eqref{eqvn} can be rewritten as
$$\left\{\baa{rcll}
\Delta(v_n-\theta_n\varphi_1)+(\lambda_1+\epsilon_n)(v_n-\theta_n\varphi_1) & = & w_n-\epsilon_n\theta_n\varphi_1 & \hbox{in }\Omega,\vspace{3pt}\\
v_n-\theta_n\varphi_1 & = & 0 & \hbox{on }\partial\Omega.\eaa\right.$$
By multiplying the above equation by $v_n-\theta_n\varphi_1\in W^{2,p}(\Omega)\cap W^{1,p}_0(\Omega)\cap C^1(\overline{\Omega})$ and integrating by parts, one gets that
\be\label{vn0}
\int_\Omega|\nabla(v_n-\theta_n\varphi_1)|^2-(\lambda_1+\epsilon_n)\int_\Omega(v_n-\theta_n\varphi_1)^2=-\int_\Omega(w_n-\epsilon_n\theta_n\varphi_1)\,(v_n-\theta_n\varphi_1).
\ee
But each function $v_n-\theta_n\varphi_1\in H^1_0(\Omega)$ is orthogonal to $\varphi_1$ for the scalar product in $L^2(\Omega)$, since $\|\varphi_1\|_{L^2(\Omega)}=1$. From the decomposition of $L^2(\Omega)$ with respect to the basis of eigenfunctions of the Laplace operator with Dirichlet boundary conditions~\cite{HB}, it follows that
\be\label{vn1}
\int_\Omega|\nabla(v_n-\theta_n\varphi_1)|^2\ge\lambda_2\int_\Omega(v_n-\theta_n\varphi_1)^2
\ee
for all $n\in\N$, whence
\be\label{vn2}\baa{rcl}
\displaystyle\int_\Omega|\nabla(v_n\!\!-\theta_n\varphi_1)|^2-(\lambda_1\!+\!\epsilon_n)\int_\Omega(v_n\!-\!\theta_n\varphi_1)^2 & \ge & \displaystyle\Big(1\!-\!\frac{\lambda_1\!+\!\epsilon_n}{\lambda_2}\Big)\int_\Omega|\nabla(v_n\!-\!\theta_n\varphi_1)|^2\vspace{3pt}\\
& \ge & \displaystyle\frac{\lambda_2-\lambda_1}{2\lambda_2}\int_\Omega|\nabla(v_n-\theta_n\varphi_1)|^2\eaa
\ee
since $\epsilon_n<(\lambda_2-\lambda_1)/2$. On the other hand, since $p>N$, there holds
$$1<p':=\frac{p}{p-1}<\frac{2N}{N-2}\ \hbox{(if $N\ge3$)},\ \hbox{ and }\ p'<+\infty\ \hbox{(if $N\le2$)}.$$
Therefore, from the Sobolev embeddings and Poincar\'e inequality, there is a constant $C>0$ such that $C\|v\|_{L^{p'}(\Omega)}^2\le\|\nabla v\|_{L^2(\Omega)}^2$ for all $v\in H^1_0(\Omega)$. Together with~\eqref{vn0} and~\eqref{vn2}, one gets that
$$C\frac{\lambda_2-\lambda_1}{2\lambda_2}\|v_n-\theta_n\varphi_1\|_{L^{p'}(\Omega)}^2\le\|w_n-\epsilon_n\theta_n\varphi_1\|_{L^p(\Omega)}\|v_n-\theta_n\varphi_1\|_{L^{p'}(\Omega)},$$
that is,
$$C\frac{\lambda_2-\lambda_1}{2\lambda_2}\|v_n-\theta_n\varphi_1\|_{L^{p'}(\Omega)}\le\|w_n-\epsilon_n\theta_n\varphi_1\|_{L^p(\Omega)},$$
for all $n\in\N$. But the sequence $(w_n)_{n\in\N}$ is bounded in $L^p(\Omega)$, and the sequences $(\epsilon_n)_{n\in\N}$ and~$(\theta_n)_{n\in\N}$ are bounded in $\R$, whence the sequence $(w_n-\epsilon_n\theta_n\varphi_1)_{n\in\N}$ is bounded in~$L^p(\Omega)$. As a consequence, the sequence $(v_n-\theta_n\varphi_1)_{n\in\N}$ is bounded in $L^{p'}(\Omega)$, and so is the sequence~$(v_n)_{n\in\N}$. By rewriting~\eqref{eqvn} as
$$\left\{\baa{rcll}
\Delta v_n & = & w_n-(\lambda_1+\epsilon_n)v_n & \hbox{in }\Omega,\vspace{3pt}\\
v_n & = & 0 & \hbox{on }\partial\Omega,\eaa\right.$$
the sequence of right-hand sides $(w_n-(\lambda_1+\epsilon_n)v_n)_{n\in\N}$ is bounded in $L^{\min(p,p')}(\Omega)$. By a bootstrap procedure, it follows finally that the sequence $(v_n)_{n\in\N}$ is bounded in $W^{2,p}(\Omega)$.

Therefore, remembering that $p\in(N,+\infty)$, there exists a function $v$ belonging to~$W^{2,p}(\Omega)\cap W^{1,p}_0(\Omega)\,(\subset C^1(\overline{\Omega}))$ such that, up to a subsequence,
$$v_n\to v\ \hbox{ in $C^1(\overline{\Omega})$ as~$n\to+\infty$}.$$
Remember that $\|w_n\|_{L^1(\Omega)}\le\eta_n\to0$ as $n\to+\infty$. By multiplying~\eqref{eqvn} by any test function~$\varphi$ in $C^1(\Omega)$ with compact support included in $\Omega$, and passing to the limit as~$n\to+\infty$, it follows that $v$ is an $H^1_0(\Omega)$ weak solution of~$\Delta v+\lambda_1v=0$ in $\Omega$, and then a classical solution, with Dirichlet boundary conditions. Therefore, from the uniqueness of the eigenfunctions of~\eqref{varphi1} up to multiplicative constants, there is $c\in\R$ such that
$$v=c\varphi_1.$$
Together with~\eqref{intvn} and $\|\varphi_1\|_{L^2(\Omega)}=1$, this entails that $0<a\le c\le b$. Lastly, since $v_n=\varphi_1=0$ and $\nu\cdot\nabla\varphi_1<0$ on $\partial\Omega$, together with $\varphi_1>0$ in $\Omega$ and $v_n\to c\varphi_1$ in~$C^1(\overline{\Omega})$ as $n\to+\infty$, one finally concludes that
$$\frac{v_n}{\varphi_1}\to c\ \hbox{ uniformly in $\Omega$}\ \hbox{ as $n\to+\infty$}.$$
Since $c\in[a,b]$ and the sequences $(\epsilon_n,\eta_n)_{n\in\N}$ converging to $(0^+,0^+)$ and $(w_n)_{n\in\N}$ satis\-fying~\eqref{hypwn} were arbitrary, the proof of Proposition~\ref{pro2} is thereby complete.\hfill$\Box$\break

With similar arguments, the following quantified version of the maximum principle for~\eqref{Teps} when $\epsilon\to0^-$ can be shown. 

\begin{pro}\label{pro3}
Let $p\in(N,+\infty)$, $M\in(0,+\infty)$, and let $\varphi_1$ be the solution of~\eqref{varphi1} such that $\|\varphi_1\|_{L^2(\Omega)}=1$. Then, for every $0<a\le b<+\infty$,
$$a\le\liminf_{(\epsilon,\eta)\to(0^-,0^+)}\Big[\inf_{w\in E_{\epsilon b,\epsilon a},\,\|w\|_{L^1(\Omega)}\le\eta,\,\|w\|_{L^p(\Omega)}\le M}\Big(\inf_{\Omega}\frac{\mathcal{T}_\epsilon(w)}{\varphi_1}\Big)\Big]$$
and
$$\limsup_{(\epsilon,\eta)\to(0^-,0^+)}\Big[\sup_{w\in E_{\epsilon b,\epsilon a},\,\|w\|_{L^1(\Omega)}\le\eta,\,\|w\|_{L^p(\Omega)}\le M}\Big(\sup_{\Omega}\frac{\mathcal{T}_\epsilon(w)}{\varphi_1}\Big)\Big]\le b.$$
\end{pro}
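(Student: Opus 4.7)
The plan is to mirror the proof of Proposition~\ref{pro2} almost verbatim, with careful attention to the sign changes induced by $\epsilon\to0^-$. Fix a sequence $(\epsilon_n,\eta_n)\to(0^-,0^+)$ with $\epsilon_n\in(-(\lambda_2-\lambda_1)/2,0)$ (in particular $\lambda_1+\epsilon_n\in(\lambda_1/2,\lambda_1)$ avoids the spectrum, so $\mathcal{T}_{\epsilon_n}$ is well defined), and a sequence $(w_n)$ with $\|w_n\|_{L^p(\Omega)}\le M$, $\|w_n\|_{L^1(\Omega)}\le\eta_n$, and $\epsilon_n b\le \int_\Omega w_n\varphi_1\le\epsilon_n a$. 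Setting $v_n:=\mathcal{T}_{\epsilon_n}(w_n)$, multiplying by $\varphi_1$ and integrating by parts yields $\epsilon_n\int_\Omega v_n\varphi_1=\int_\Omega w_n\varphi_1$. Dividing by $\epsilon_n<0$ reverses the inequalities in $E_{\epsilon_n b,\epsilon_n a}$, so one gets the key bound
$$a\le\theta_n:=\int_\Omega v_n\varphi_1\le b.$$

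Next, as in the previous proof, $v_n-\theta_n\varphi_1\in W^{2,p}(\Omega)\cap W^{1,p}_0(\Omega)$ is $L^2$-orthogonal to $\varphi_1$ and satisfies
$$\Delta(v_n-\theta_n\varphi_1)+(\lambda_1+\epsilon_n)(v_n-\theta_n\varphi_1)=w_n-\epsilon_n\theta_n\varphi_1\ \hbox{ in }\Omega,$$
with zero Dirichlet data. Testing against $v_n-\theta_n\varphi_1$ and invoking the spectral bound $\int_\Omega|\nabla(v_n-\theta_n\varphi_1)|^2\ge\lambda_2\int_\Omega(v_n-\theta_n\varphi_1)^2$ gives, since $\lambda_1+\epsilon_n<(\lambda_1+\lambda_2)/2$,
$$\int_\Omega|\nabla(v_n-\theta_n\varphi_1)|^2-(\lambda_1+\epsilon_n)\int_\Omega(v_n-\theta_n\varphi_1)^2\ge\frac{\lambda_2-\lambda_1}{2\lambda_2}\int_\Omega|\nabla(v_n-\theta_n\varphi_1)|^2,$$
which is the same coercivity estimate as in Proposition~\ref{pro2}. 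Note that here positivity of the quadratic form is automatic because $\lambda_1+\epsilon_n<\lambda_1$, but the constant $(\lambda_2-\lambda_1)/(2\lambda_2)$ obtained from the orthogonality to $\varphi_1$ is what matters for the bootstrap.

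From here, the argument is literally the one in Section~\ref{sec2}: the Sobolev embedding and Poincar\'e inequality, combined with the coercivity, provide a uniform $L^{p'}(\Omega)$ bound on $v_n-\theta_n\varphi_1$ in terms of $\|w_n-\epsilon_n\theta_n\varphi_1\|_{L^p(\Omega)}$, which is bounded since $(w_n)$ is bounded in $L^p(\Omega)$ and $(\epsilon_n\theta_n)$ is bounded. Hence $(v_n)$ is bounded in $L^{p'}(\Omega)$, and a standard elliptic bootstrap on $\Delta v_n=w_n-(\lambda_1+\epsilon_n)v_n$ upgrades this to a uniform $W^{2,p}(\Omega)$ bound. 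Extracting a subsequence, there exists $v\in W^{2,p}(\Omega)\cap W^{1,p}_0(\Omega)\subset C^1(\overline{\Omega})$ with $v_n\to v$ in $C^1(\overline{\Omega})$. Passing to the limit against test functions (using $\|w_n\|_{L^1(\Omega)}\le\eta_n\to0$ and $\epsilon_n\to0$), $v$ solves $\Delta v+\lambda_1 v=0$ in $\Omega$ with zero boundary condition, so $v=c\varphi_1$ for some $c\in\R$; the bound on $\theta_n$ gives $c\in[a,b]$. Finally, Hopf's lemma ($\nu\cdot\nabla\varphi_1<0$ on $\partial\Omega$) together with $v_n,\varphi_1\in C^1(\overline{\Omega})$ both vanishing on $\partial\Omega$ turns the $C^1$ convergence into uniform convergence of the ratios $v_n/\varphi_1\to c$ on $\Omega$, yielding both asymptotic inequalities.

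I expect no genuine obstacle here: the only subtlety is the bookkeeping of the sign of $\epsilon$ when translating the constraint $w\in E_{\epsilon b,\epsilon a}$ into $\theta_n\in[a,b]$, and the verification that $\lambda_1+\epsilon_n$ stays away from the rest of the spectrum, both of which are immediate. The proof is strictly parallel to that of Proposition~\ref{pro2}, so it would suffice to write ``the same arguments as in the proof of Proposition~\ref{pro2}, up to obvious sign changes, yield the claim'' and then sketch the two modifications just described.
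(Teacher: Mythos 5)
Your proof is correct and mirrors the paper's own argument: the paper similarly observes that $w\in E_{\epsilon b,\epsilon a}$ with $\epsilon<0$ still yields $a\le\theta_n:=\int_\Omega v_n\varphi_1\le b$ after dividing by $\epsilon_n<0$, that the coercivity estimate carries over (the paper even notes the factor $2$ in $(\lambda_2-\lambda_1)/(2\lambda_2)$ can be dropped since $\lambda_1+\epsilon_n<\lambda_1$), and that the rest is identical to the proof of Proposition~\ref{pro2}. One cosmetic slip: your parenthetical ``$\lambda_1+\epsilon_n\in(\lambda_1/2,\lambda_1)$'' is not correct in general (it should be $(\lambda_1-(\lambda_2-\lambda_1)/2,\lambda_1)$), but this is immaterial since invertibility of $\mathcal{T}_\epsilon$ and the coercivity bound only use $\lambda_1+\epsilon_n<\lambda_1$ (the paper simply takes $-\lambda_1<\epsilon_n<0$).
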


\begin{proof}
We point out that the condition $w\in E_{\epsilon b,\epsilon a}$ means that
$$\epsilon b\le\int_\Omega w\varphi_1\le\epsilon a<0$$
when $\epsilon<0$. The condition~\eqref{hypepsn} used in the proof of Proposition~\ref{pro2} can now be replaced by $-\lambda_1<\epsilon_n<0$. With the same other notations as in the proof of Proposition~\ref{pro2}, the condition~\eqref{intvn} still holds, while the number $2$ in the denominator of the last fraction in~\eqref{vn2} is not needed anymore. The rest of the proof is identical to that of Proposition~\ref{pro2}.
\end{proof}

Proposition~\ref{pro3} will actually not be used in the proof of Theorem~\ref{th1}, unlike Proposition~\ref{pro2}. But, because of its similarity to Proposition~\ref{pro2}, we stated Proposition~\ref{pro3} as a result of independent interest, on a quantified maximum principle in the vicinity of the principal eigenvalue problem~\eqref{varphi1}, for right-hand sides which satisfy some integral estimates but may not be positive.


\section{Proof of Theorem~\ref{th1}, part~$(i)$: existence of solutions}\label{sec3}

As announced in Section~\ref{intro}, the proofs of the existence of solutions to~\eqref{eq} are radically different according to the sign of $\epsilon$ and $\delta$. We also point out that, when $\epsilon=\delta=0$, then problem~\eqref{eq} reduces to~\eqref{varphi1}, for which the existence of positive solutions is known (the principal eigenfunctions). 

\vskip 0.2cm
\noindent{\it First case:}
$$\epsilon<0\ \hbox{ and }\ \delta<0.$$
From~\eqref{hypg}, there is a positive constant $C>0$ such that
\be\label{defC}
\forall\,(x,s,p)\in\overline{\Omega}\times[0,+\infty)\times\R^N,\ \ 0\le g(x,s,p)\le C+\frac{|\epsilon|s}{2|\delta|}.
\ee
Let $\psi$ be the solution of
\be\label{defpsi}\left\{\baa{rcll}
\displaystyle\Delta\psi+\Big(\lambda_1+\frac{\epsilon}{2}\Big)\psi & \!\!=\!\! & \delta C<0 & \!\hbox{in $\Omega$},\vspace{3pt}\\
\psi & \!\!=\!\! & 0 & \!\hbox{on $\partial\Omega$}.\eaa\right.
\ee
Since $\epsilon<0$, the function $\psi$ exists and belongs to all $W^{2,p}(\Omega)\cap W^{1,p}_0(\Omega)$ for $1\le p<+\infty$, and it is unique and positive in $\Omega$, from the maximum principle~\cite{bnv}. It also follows from Hopf's lemma that $\nu\cdot\nabla\psi<0$ on $\partial\Omega$.

From~\eqref{hypg} again, there is $s_0>0$ such that
\be\label{defs0}
\forall\,(x,s,p)\in\overline{\Omega}\times[0,s_0]\times\R^N,\ \ g(x,s,p)\ge\frac{|\epsilon|s}{|\delta|}.
\ee
One can then fix $\sigma>0$ small enough such that
\be\label{defsigma}
\sigma\|\varphi_1\|_{L^\infty(\Omega)}\le s_0\ \hbox{ and }\ \sigma\varphi_1\le\psi\hbox{ in~$\overline{\Omega}$}.
\ee

Consider now the non-empty convex set 
$$E:=\big\{v\in C^1(\overline{\Omega}): \sigma\varphi_1\le v\le\psi\hbox{ in }\overline{\Omega}\big\},$$
which is closed in $C^1(\overline{\Omega})$ endowed with the standard $C^1(\overline{\Omega})$ norm. For every $v\in E$, the function $x\mapsto\delta g(x,v(x),\nabla v(x))$ is continuous in $\overline{\Omega}$. Let then $T(v)$ be the unique solution of
\be\label{defTv}\left\{\baa{rcll}
\Delta T(v)+(\lambda_1+\epsilon)T(v) & \!\!=\!\! & \delta g(x,v(x),\nabla v(x)) & \!\hbox{in $\Omega$},\vspace{3pt}\\
T(v) & \!\!=\!\! & 0 & \!\hbox{on $\partial\Omega$},\eaa\right.
\ee
that is,
$$T(v)=\mathcal{T}_\epsilon(\delta g(\cdot,v,\nabla v)),$$
with the notation~\eqref{Teps}. The function $T(v)$ belongs to $W^{2,p}(\Omega)\cap W^{1,p}_0(\Omega)$ for all $1\le p<+\infty$, and thus in $C^1(\overline{\Omega})$. Remember that both $\epsilon$ and $\delta$ are negative. From~\eqref{defC}-\eqref{defpsi}, there holds
$$\displaystyle\Delta T(v)+(\lambda_1+\epsilon)T(v)=\delta g(x,v(x),\nabla v(x))\ge\delta C+\frac{\epsilon v(x)}{2}\ge\delta C+\frac{\epsilon\psi(x)}{2},$$
a.e. in $\Omega$, while $\Delta\psi+(\lambda_1+\epsilon)\psi=\delta C+\epsilon\psi/2$ in $\Omega$,
whence $T(v)\le\psi$ in $\overline{\Omega}$ from the maximum principle~\cite{bnv}. Similarly, from~\eqref{hypg},~\eqref{defs0} and~\eqref{defsigma}, there holds
$$\displaystyle\Delta T(v)+(\lambda_1+\epsilon)T(v)=\delta g(x,v(x),\nabla v(x))\le\delta g(x,\sigma\varphi_1(x),\nabla v(x))\le\epsilon\sigma\varphi_1(x)$$
a.e. in $\Omega$, while $\Delta(\sigma\varphi_1)+(\lambda_1+\epsilon)(\sigma\varphi_1)=\epsilon\sigma\varphi_1$ in $\Omega$, whence $T(v)\ge\sigma\varphi_1$ in $\overline{\Omega}$ from the maximum principle~\cite{bnv}.

To sum up, $T$ maps the non-empty closed convex set $E$ into itself. Furthermore, since
$$|\delta g(\cdot,v,\nabla v)|\le|\delta|C+\frac{|\epsilon|\psi}{2}\ \hbox{ in }\Omega$$
for all $v\in E$, and since $\psi$ is positive and bounded, it follows that $T(E)$ is bounded in~$W^{2,p}(\Omega)$ for every $1\le p<+\infty$, and in particular $\overline{T(E)}$ is a compact subset of $C^1(\overline{\Omega})$. Finally, the Schauder fixed point theorem yields the existence of a fixed point of~$T$ in~$E$, that is, there is $u\in E$ (whence, $u>0$ in $\Omega$) belonging to $W^{2,p}(\Omega)\cap W^{1,p}_0(\Omega)$ for all $1\le p<+\infty$, solution to~\eqref{eq} (and then $u$ is a classical $C^2(\Omega)\cap C(\overline{\Omega})$ solution, from the general considerations of Section~\ref{intro}). 

\vskip 0.2cm
\noindent{\it Second case:}
$$\epsilon>0\ \hbox{ and }\ \delta>0.$$
In that case, we are also given a real number $A\ge1$, we assume that $A^{-1}\epsilon\le\delta\le A\epsilon$, and we shall prove the existence of solutions to~\eqref{eq} when $\epsilon>0$ is small enough. First of all, without loss of generality, we consider the principal eigenfunction $\varphi_1$ of~\eqref{varphi1} normalized with $\|\varphi_1\|_{L^2(\Omega)}=1$.

From~\eqref{hypg}, there are some positive constant $C'>0$ and $s'_0>0$ such that
\be\label{defC'}
\forall\,(x,s,p)\in\overline{\Omega}\times[0,+\infty)\times\R^N,\ \ 0\le g(x,s,p)\le C'+\frac{s}{4A\|\varphi_1\|_{L^1(\Omega)}\|\varphi_1\|_{L^\infty(\Omega)}}
\ee
and
\be\label{defs'0}
\forall\,(x,s,p)\in\overline{\Omega}\times[0,s'_0]\times\R^N,\ \ g(x,s,p)\ge2As.
\ee
Fix $\beta>0$ large enough so that
\be\label{defbeta}
A\,C'\|\varphi_1\|_{L^1(\Omega)}+\frac{\beta}{4\|\varphi_1\|_{L^\infty(\Omega)}}\le\frac{\beta}{2\|\varphi_1\|_{L^\infty(\Omega)}},
\ee
and then $\varsigma>0$ small enough so that 
\be\label{defvarsigma}
\varsigma\|\varphi_1\|_{L^\infty(\Omega)}\le\min\Big(s'_0,\frac\beta4\Big).
\ee
Pick any $p\in(N,+\infty)$, and set
\be\label{defM2}
M:=\Big(C'+\frac{\beta}{4A\|\varphi_1\|_{L^1(\Omega)}\|\varphi_1\|_{L^\infty(\Omega)}}\Big)\times|\Omega|^{1/p}>0,
\ee
where $|\Omega|$ denotes the $N$-dimensional Lebesgue measure of $\Omega$. With the parameters $p$, $M$, and
$$0<a:=2\varsigma\le b:=\frac{\beta}{2\|\varphi_1\|_{L^\infty(\Omega)}}<+\infty,$$
Proposition~\ref{pro2} yields the existence of $\epsilon_0>0$ and $\eta_0>0$ such that, for every $\epsilon\in(0,\epsilon_0)$ and for every $w\in L^p(\Omega)\cap E_{\epsilon a,\epsilon b}$ such that $\|w\|_{L^p(\Omega)}\le M$ and $\|w\|_{L^1(\Omega)}\le\eta_0$, then
\be\label{encadrements}
\varsigma\varphi_1\le\mathcal{T}_\epsilon(w)\le\frac{\beta}{\|\varphi_1\|_{L^\infty(\Omega)}}\,\varphi_1\le\beta\ \hbox{ in }\overline{\Omega}.
\ee
Without loss of generality, one can assume that
\be\label{defepsilon0}
0<\epsilon_0\le\min\Big(\lambda_2-\lambda_1,\frac{1}{A}\Big)\ \hbox{ and }\ A\epsilon_0\Big(C'+\frac{\beta}{4A\|\varphi_1\|_{L^1(\Omega)}\|\varphi_1\|_{L^\infty(\Omega)}}\Big)\times|\Omega|\le\eta_0.
\ee

Fix in the sequel any $\epsilon\in(0,\epsilon_0)$ and $\delta$ such that
$$A^{-1}\epsilon\le\delta\le A\epsilon$$
(thus, $0<\delta\le A\epsilon_0\le1$). Consider the non-empty convex set 
$$E':=\big\{v\in C^1(\overline{\Omega}): \varsigma\varphi_1\le v\le\beta\hbox{ in }\overline{\Omega}\big\},$$
which is closed in $C^1(\overline{\Omega})$ endowed with the standard $C^1(\overline{\Omega})$ norm, and consider any $v\in E'$. The function
$$x\mapsto w(x):=\delta g(x,v(x),\nabla v(x))$$
is continuous in $\overline{\Omega}$, and we still call $T(v):=\mathcal{T}_\epsilon(w)$ the unique solution of~\eqref{defTv}. The function $T(v)$ belongs to $W^{2,q}(\Omega)\cap W^{1,q}_0(\Omega)$ for all $1\le q<+\infty$, and thus in $C^1(\overline{\Omega})$. Let us check that the function $w$ fulfills the conditions of the previous paragraph. First of all, $w\in L^p(\Omega)$ and, by~\eqref{hypg} and~\eqref{defC'},
\be\label{ineqw3}
0\le w\le\delta\Big(C'+\frac{\beta}{4A\|\varphi_1\|_{L^1(\Omega)}\|\varphi_1\|_{L^\infty(\Omega)}}\Big)\le C'+\frac\beta{4A\|\varphi_1\|_{L^1(\Omega)}\|\varphi_1\|_{L^\infty(\Omega)}}\ \hbox{ in }\Omega,
\ee
whence $\|w\|_{L^p(\Omega)}\le M$ by~\eqref{defM2}. Furthermore, from~\eqref{defbeta} and the previous line,
$$\int_\Omega w\varphi_1\le\delta\Big(C'\|\varphi_1\|_{L^1(\Omega)}+\frac\beta{4A\|\varphi_1\|_{L^\infty(\Omega)}}\Big)\le\epsilon\Big(AC'\|\varphi_1\|_{L^1(\Omega)}+\frac\beta{4\|\varphi_1\|_{L^\infty(\Omega)}}\Big)\le\frac{\beta\epsilon}{2\|\varphi_1\|_{L^\infty(\Omega)}},$$
while
$$\int_\Omega w\varphi_1\ge\delta\int_\Omega g(x,\varsigma\varphi_1(x),\nabla v(x))\,\varphi_1(x)\,dx\ge2A\delta\varsigma\int_\Omega\varphi_1^2=2A\delta\varsigma\ge2\epsilon\varsigma$$
by~\eqref{hypg},~\eqref{defs'0} and~\eqref{defvarsigma}. Therefore, $w\in E_{\epsilon a,\epsilon b}$. Lastly,~\eqref{defepsilon0} and~\eqref{ineqw3}, together with $\delta\le A\epsilon_0$, imply that
$$\|w\|_{L^1(\Omega)}\le\delta\Big(C'+\frac{\beta}{4A\|\varphi_1\|_{L^1(\Omega)}\|\varphi_1\|_{L^\infty(\Omega)}}\Big)\times|\Omega|\le\eta_0.$$
One then infers from~\eqref{encadrements} that
$$\varsigma\varphi_1\le T(v)=\mathcal{T}_\epsilon(w)\le\beta\ \hbox{ in $\overline{\Omega}$}.$$

To sum up, $T$ maps the non-empty closed convex set $E'$ into itself. Furthermore, since, by~\eqref{ineqw3},
$$0\le w=\delta g(\cdot,v,\nabla v)\le C'+\frac{\beta}{4A\|\varphi_1\|_{L^1(\Omega)}\|\varphi_1\|_{L^\infty(\Omega)}}\ \hbox{ in $\Omega$}$$
for all $v\in E'$, the image $T(E')$ is bounded in $W^{2,q}(\Omega)$ for all $1\le q<+\infty$, and in particular $\overline{T(E')}$ is a compact subset of $C^1(\overline{\Omega})$. Finally, the Schauder fixed point theorem yields the existence of a fixed point of $T$ in $E'$, that is, there is $u\in E'$ (whence, $u>0$ in~$\Omega$) belonging to $W^{2,q}(\Omega)\cap W^{1,q}_0(\Omega)$ for all $1\le q<+\infty$, solution to~\eqref{eq} (and then $u$ is a classical $C^2(\Omega)\cap C(\overline{\Omega})$ solution, from the general considerations of Section~\ref{intro}). The proof of part~$(i)$ of Theorem~\ref{th1} is thereby complete.\hfill$\Box$

\begin{remark}
In the case $\epsilon>0$ and $\delta>0$, even if it meant increasing the constant $\beta>0$, one could have assumed by~\eqref{hypg} without loss of generality that $(\lambda_1+\epsilon)\beta\ge\lambda_1\beta\ge\delta g(\cdot,\beta,0)$ in $\Omega$, that is, the constant $\beta$ would be a sub-solution of~\eqref{eq}. Furthermore, even it is meant decreasing $\varsigma>0$, one could have assumed by~\eqref{hypg} without loss of generality that
$$\Delta(\varsigma\varphi_1)+(\lambda_1+\epsilon)\varsigma\varphi_1=\epsilon\varsigma\varphi_1\le\delta g(\cdot,\varsigma\varphi_1,\varsigma\nabla\varphi_1)$$
in $\Omega$, that is, the function $\varsigma\varphi_1$ would be a super-solution of~\eqref{eq}. The previous proof then shows the existence of a solution $u$ to~\eqref{eq} between the super-solution $\varsigma\varphi_1$ and the sub-solution $\beta$, ordered in the unsual way.
\end{remark}


\section{Proof of Theorem~\ref{th1}, parts~$(ii)$-$(iii)$: convergence and log-concavity}\label{sec4}

{\it Proof of part~$(ii)$ of Theorem~$\ref{th1}$.} We start with considering any sequence $(\epsilon_n,\delta_n)_{n\in\N}$ converging to $(0^+,0^+)$ or $(0^-,0^-)$ and for which there is $A\ge1$ such that
\be\label{A2}
\forall\,n\in\N,\ \ A^{-1}|\epsilon_n|\le|\delta_n|\le A\,|\epsilon_n|.
\ee
We then let $(u_n)_{n\in\N}$ be a sequence of solutions to~\eqref{eq}, with parameters $(\epsilon_n,\delta_n)$ instead of $(\epsilon,\delta)$. Such solutions exist for all $n$ large enough, from part~$(i)$ of Theorem~\ref{th1}. From the considerations of Section~\ref{intro}, these solutions $u_n$ belong to $W^{2,q}(\Omega)\cap W^{1,q}_0(\Omega)$ for all $1\le q<+\infty$, and then to $C^{1,\gamma}(\overline{\Omega})$ for all $0<\gamma<1$. They are also of class $C^2(\Omega)$.

Let us first show that the sequence $(u_n)_{n\in\N}$ is bounded in $W^{2,q}(\Omega)$, for every $1\le q<+\infty$. From~\eqref{hypg}, there exists a constant $C>0$ such that
\be\label{defC3}
\forall\,(x,s,p)\in\overline{\Omega}\times[0,+\infty)\times\R^N,\ \ 0\le g(x,s,p)\le C+\frac{s}{2A}.
\ee
By integrating~\eqref{eq} against $\varphi_1$, we get that
\be\label{integrals2}
\epsilon_n\int_\Omega u_n(x)\,\varphi_1(x)\,dx=\delta_n\int_\Omega g(x,u_n(x),\nabla u_n(x))\,\varphi_1(x)\,dx,
\ee
whence
$$\baa{rcl}
\displaystyle|\epsilon_n|\int_\Omega u_n(x)\,\varphi_1(x)\,dx & \le & \displaystyle|\delta_n|\int_\Omega\Big(C+\frac{u_n(x)}{2A}\Big)\varphi_1(x)\,dx\vspace{3pt}\\
& \le & \displaystyle A\,C\,|\epsilon_n|\,\|\varphi_1\|_{L^1(\Omega)}+\frac{|\epsilon_n|}{2}\int_\Omega u_n(x)\,\varphi_1(x)\,dx.\eaa$$
As a consequence, the sequence
$$(\theta_n)_{n\in\N}:=\Big(\frac{1}{\|\varphi_1\|_{L^2(\Omega)}^2}\int_\Omega u_n\varphi_1\Big)_{n\in\N}$$
is bounded. By rewriting~\eqref{eq} as
$$\left\{\baa{rcll}
\Delta(u_n-\theta_n\varphi_1)+(\lambda_1+\epsilon_n)(u_n-\theta_n\varphi_1) & = & \delta_n\,g(x,u_n,\nabla u_n)-\epsilon_n\theta_n\varphi_1 & \hbox{in }\Omega,\vspace{3pt}\\
u_n-\theta_n\varphi_1 & = & 0 & \hbox{on }\partial\Omega,\eaa\right.$$
by multiplying the above equation by $u_n-\theta_n\varphi_1$ (belonging to $W^{2,q}(\Omega)\cap W^{1,q}_0(\Omega)\cap C^1(\overline{\Omega})$ for each $1\le q<+\infty$), and by integrating by parts, one gets that
$$\baa{rcl}
\displaystyle\int_\Omega|\nabla(u_n\!-\!\theta_n\varphi_1)|^2\!-\!(\lambda_1\!+\!\epsilon_n)\int_\Omega(u_n\!-\!\theta_n\varphi_1)^2 & \!\!=\!\! & \displaystyle-\delta_n\int_\Omega g(x,u_n,\nabla u_n)\,(u_n-\theta_n\varphi_1)\vspace{3pt}\\
& \!\!\!\! & \displaystyle+\epsilon_n\theta_n\int_\Omega(u_n-\theta_n\varphi_1)\,\varphi_1\vspace{3pt}\\
& \!\!\le\!\! & \displaystyle|\delta_n|\int_\Omega\Big(C+\frac{u_n}{2A}\Big)\,|u_n-\theta_n\varphi_1|\vspace{3pt}\\
& \!\!\!\! & + |\epsilon_n|\,|\theta_n|\,\|u_n\!-\!\theta_n\varphi_1\|_{L^2(\Omega)}\,\|\varphi_1\|_{L^2(\Omega)}.\eaa$$
On the other hand, the inequality~\eqref{vn1} still holds, with $v_n$ replaced by $u_n$. Hence, by assuming without loss of generality that $\epsilon_n<(\lambda_2-\lambda_1)/2$, one gets from the Cauchy-Schwarz inequality that
$$\baa{l}
\displaystyle\frac{\lambda_2-\lambda_1}{2}\|u_n-\theta_n\varphi_1\|_{L^2(\Omega)}^2\vspace{3pt}\\
\qquad\qquad\le\displaystyle\Big(|\delta_n|\,C\,|\Omega|^{1/2}+\frac{|\delta_n|\,|\theta_n|\,\|\varphi_1\|_{L^2(\Omega)}}{2A}+|\epsilon_n|\,|\theta_n|\,\|\varphi_1\|_{L^2(\Omega)}\Big)\,\|u_n-\theta_n\varphi_1\|_{L^2(\Omega)}\vspace{3pt}\\
\qquad\qquad\ \ \ \ \displaystyle+\frac{|\delta_n|}{2A}\|u_n-\theta_n\varphi_1\|_{L^2(\Omega)}^2.\eaa$$
Since $(|\epsilon_n|,|\delta_n|)\to(0^+,0^+)$ as $n\to+\infty$ and the sequence $(\theta_n)_{n\in\N}$ is bounded, it follows that the sequence $(u_n-\theta_n\varphi_1)_{n\in\N}$ is bounded in $L^2(\Omega)$ and even converges to $0$ in $L^2(\Omega)$. From the boundedness of $(\theta_n)_{n\in\N}$ again, there is then a real number $B$ such that, up to extraction of a subsequence,
$$u_n\to B\varphi_1\ \hbox{ in $L^2(\Omega)$ as $n\to+\infty$}.$$
Since each function $u_n$ is positive in $\Omega$, and $\varphi_1$ is positive too, there holds $B\ge0$. However, we want to have the convergence in a stronger sense, and also to show that $B$ is positive.

By rewriting~\eqref{eq} (with $\epsilon_n$ and $\delta_n$) as
\be\label{eqter}\left\{\baa{rcll}
\Delta u_n & = & \delta_n\,g(x,u_n,\nabla u_n)-(\lambda_1+\epsilon_n)\,u_n & \hbox{in }\Omega,\vspace{3pt}\\
u_n & = & 0 & \hbox{on }\partial\Omega,\eaa\right.
\ee
and by using~\eqref{defC3} together with the boundedness of the sequences $(\epsilon_n)_{n\in\N}$, $(\delta_n)_{n\in\N}$ and $(\|u_n\|_{L^2(\Omega)})_{n\in\N}$, the sequence of right-hand sides of the above equation is bounded in $L^2(\Omega)$, whence $(u_n)_{n\in\N}$ is bounded in $W^{2,2}(\Omega)$. By a bootstrap procedure, it follows finally that the sequence $(u_n)_{n\in\N}$ belongs to $W^{2,q}(\Omega)\cap W^{1,q}_0(\Omega)$ and is bounded in $W^{2,q}(\Omega)$ for every $1\le q<+\infty$. Therefore,
\be\label{limitsB}
u_n\to B\varphi_1\ \hbox{ in $C^1(\overline{\Omega})$ as $n\to+\infty$}
\ee
(and even $u_n\to B\varphi_1$ in $C^{1,\gamma}(\overline{\Omega})$ as $n\to+\infty$ for every $0<\gamma<1$). Let us now show that~$B>0$. Assume by way of contradiction that $B=0$, and let $s'_0>0$ be as in~\eqref{defs'0}. Then $0<u_n\le s'_0$ in $\Omega$ for all $n$ large enough, and~\eqref{defs'0} and~\eqref{integrals2} entail that
$$\frac{\epsilon_n}{\delta_n}\int_\Omega u_n(x)\,\varphi_1(x)\,dx=\int_\Omega g(x,u_n(x),\nabla u_n(x))\,\varphi_1(x)\,dx\ge 2A\int_\Omega u_n(x)\,\varphi_1(x)\,dx,$$
contradicting~\eqref{A2}. Therefore,
$$B>0.$$

Assuming now that $\delta_n\sim c\,\epsilon_n$ as $n\to+\infty$ for some $c>0$, one derives from~\eqref{integrals2} and the continuity of $g$ in $\overline{\Omega}\times[0,+\infty)\times\R^N$ that
$$B\,\|\varphi_1\|_{L^2(\Omega)}^2=c\int_\Omega g(x,B\varphi_1(x),B\nabla\varphi_1(x))\,\varphi_1(x)\,dx.$$
Therefore, if one further assumes that $g(x,s,p)=g(x,s)$ is independent of $p$ and satisfies~\eqref{hypg2}, one has
$$\|\varphi_1\|_{L^2(\Omega)}^2=c\int_\Omega\frac{g(x,B\varphi_1(x))}{B}\,\varphi_1(x)\,dx,$$
and the quantity in the right-hand is decreasing with respect to $B>0$ if considered as a parameter. As a consequence, $B>0$ in~\eqref{limitsB} is uniquely determined by $\Omega$, $g$, $c$ and $\varphi_1$, and does not depend on any sequence $(\epsilon_n,\delta_n)_{n\in\N}$ converging to $(0^+,0^+)$ or $(0^-,0^-)$ with $\delta_n\sim c\,\epsilon_n$ as $n\to+\infty$. This actually shows part~$(ii)$ of Theorem~\ref{th1}.

\vskip 0.3cm
\noindent{\it Proof of part~$(iii)$ of Theorem~$\ref{th1}$.} Here, $\partial\Omega$ is of class $C^{2,\alpha}$ for some $\alpha\in(0,1)$, and $\Omega$ is strictly convex. We fix $A\ge1$, and assume by way of contradiction that the conclusion does not hold. Then there is a sequence $(\epsilon_n,\delta_n)_{n\in\N}$ converging to $(0^+,0^+)$ or $(0^-,0^-)$ and satisfying~\eqref{A2}, together with a sequence of solutions $(u_n)_{n\in\N}$ of~\eqref{eq} with $(\epsilon,\delta)$ replaced by $(\epsilon_n,\delta_n)$, such that $u_n$ is not log-concave in $\Omega$, for every $n\in\N$. Therefore, there are a sequence of points $(x_n)_{n\in\N}$ in $\Omega$ and a sequence of unit vectors $(\xi_n)_{n\in\N}$ in $\R^N$ such that
\be\label{Hnxn}
\xi_n\,H_n(x_n)\,\xi_n>0\ \hbox{ for all $n\in\N$},
\ee
where $H_n(x_n):=(D^2\log u_n)(x_n)$ denotes the Hessian matrix of $\log u_n$ at $x_n$, and $\xi H\xi:=\sum_{1\le i,j\le N}H_{i,j}\xi_i\xi_j$ for any $N\times N$ symmetric matrix $H$ and any $\xi\in\R^N$.

From the previous paragraphs, there is $B>0$ such that, up to extraction of a subsequence,
\be\label{unB}
u_n\to B\varphi_1\hbox{ as $n\to+\infty$}
\ee
in $C^{1,\gamma}(\overline{\Omega})$ for every $0<\gamma<1$. From the local H\"older-continuity of $g$ in~\eqref{hypg}, there is then $\alpha'>0$ such that the sequence $(g(\cdot,u_n,\nabla u_n))_{n\in\N}$ is bounded in $C^{0,\alpha'}(\overline{\Omega})$. Using~\eqref{eqter} together with the boundedness of $(\epsilon_n,\delta_n)_{n\in\N}$ and the $C^{2,\alpha}$ smoothness of $\partial\Omega$, one infers that the sequence $(u_n)_{n\in\N}$ is bounded in $C^{0,\min(\alpha,\alpha')}(\overline{\Omega})$. Thus, the convergence~\eqref{unB} actually holds in $C^2(\overline{\Omega})$.

Up to extraction of another subsequence, there are $x\in\overline{\Omega}$ and a unit vector $\xi$ in $\R^N$ such that $x_n\to x$ and $\xi_n\to\xi$ as $n\to+\infty$. Two cases may occur, according to the location of $x$. If $x\in\Omega$, then
$$\xi_n\,H_n(x_n)\,\xi_n\to\frac{\varphi_1(x)\partial_{\xi\xi}\varphi_1(x)-(\partial_\xi\varphi_1(x))^2}{(\varphi_1(x))^2}=\xi\,(D^2\log\varphi_1)(x)\,\xi\ \hbox{ as $n\to+\infty$}.$$
But $\xi\,(D^2\log\varphi_1)(x)\,\xi<0$ from~\cite[Lemma~2.5]{lv}, and we then get a contradiction with~\eqref{Hnxn}. Therefore, $x\in\partial\Omega$. Two cases may then occur, according to the direction of $\xi$. If $\xi\cdot\nu(x)\neq0$, where $\nu(x)$ denotes the outward unit normal to $\Omega$ at $x$, then $\partial_\xi\varphi_1(x)\neq0$ from the Hopf lemma (more precisely, $\partial_\xi\varphi_1(x)>0$ if $\xi\cdot\nu(x)<0$, and $\partial_\xi\varphi_1(x)=-\partial_{-\xi}\varphi_1(x)<0$ if $\xi\cdot\nu(x)>0$), whence
$$\xi_n\,H_n(x_n)\,\xi_n=\frac{u_n(x_n)\partial_{\xi_n\xi_n}u_n(x_n)-(\partial_{\xi_n}u_n(x_n))^2}{(u_n(x_n))^2}\sim\frac{-(\partial_{\xi_n}u_n(x_n))^2}{(u_n(x_n))^2}\to-\infty\ \hbox{ as $n\to+\infty$}$$
since $0<u_n(x_n)\to\varphi_1(x)=0$, $\partial_{\xi_n\xi_n}u_n(x_n)\to\partial_{\xi\xi}\varphi_1(x)$ and $\partial_{\xi_n}u_n(x_n)\to\partial_\xi\varphi_1(x)\neq0$ as $n\to+\infty$. This again contradicts~\eqref{Hnxn}. Therefore, $\xi\cdot\nu(x)=0$. In that case, from the strict convexity of $\Omega$, one knows that $\partial_{\xi\xi}\varphi_1(x)<0$, see the proof of~\cite[Lemma~2.5]{lv}. Therefore,
$$\baa{rcl}
\displaystyle\limsup_{n\to+\infty}\,\xi_n\,H_n(x_n)\,\xi_n & = & \displaystyle\limsup_{n\to+\infty}\frac{u_n(x_n)\partial_{\xi_n\xi_n}u_n(x_n)-(\partial_{\xi_n}u_n(x_n))^2}{(u_n(x_n))^2}\vspace{3pt}\\
& \le & \displaystyle\limsup_{n\to+\infty}\frac{\partial_{\xi_n\xi_n}u_n(x_n)}{u_n(x_n)}=-\infty\eaa$$
since $\partial_{\xi_n\xi_n}u_n(x_n)\to\partial_{\xi\xi}\varphi_1(x)<0$ and $0<u_n(x_n)\to\varphi_1(x)=0$ as $n\to+\infty$. This again contradicts~\eqref{Hnxn}.

As a conclusion, all possible cases regarding the limits $x$ and $\xi$ are ruled out. That shows that the assumption~\eqref{Hnxn} was impossible. The proof of part~$(iii)$ of Theorem~\ref{th1} is thereby complete.\hfill$\Box$

\begin{remark}
The above arguments actually show that the solutions $u_n$ are not only log-concave, but also uniformly strictly log-concave for all $n$ large enough, in the sense that there is $\rho>0$, independent of the sequences $(\epsilon_n,\delta_n)_{n\in\N}$ and $(u_n)_{n\in\N}$, such that $D^2\log u_n\le-\rho I_N$ for all $n$ large enough in the sense of symmetric matrices, where $I_N$ denotes the $N\times N$ identity matrix.
\end{remark}


{\footnotesize{\bibliographystyle{plain}
\bibliography{biblio}}}

\begin{thebibliography}{10}

\bibitem{app}
A.~Acker, L.~E. Payne, and G.~Philippin.
\newblock {On the convexity of level lines of the fundamental mode in the
  clamped membrane problem, and the existence of convex solutions in a related
  free boundary problem}.
\newblock {\em Z.~Angew. Math. Phys.}, 32:683--694, 1981.

\bibitem{abcs}
N.~Almousa, C.~Bucur, R.~Cornale, and M.~Squassina.
\newblock {Concavity principles for nonautonomous elliptic equations and
  applications}.
\newblock {\em Asymp. Anal.}, 135:509--524, 2023.

\bibitem{aags}
N.~M. Almousa, J.~Assettini, M.~Gallo, and M.~Squassina.
\newblock {Concavity properties for quasilinear equations and optimality
  remarks}.
\newblock {\em Diff. Int. Equations}, 37:1--16, 2024.

\bibitem{a}
H.~Amann.
\newblock {Fixed point equations and nonlinear eigenvalue problems in ordered
  Banach spaces}.
\newblock {\em SIAM Rev.}, 18:620--709, 1976.

\bibitem{abc}
A.~Ambrosetti, H.~Brezis, and G.~Cerami.
\newblock {Combined effects of concave and convex nonlinearities in some
  elliptic problems}.
\newblock {\em J. Funct. Anal.}, 122:519--543, 1994.

\bibitem{ag}
D.~Arcoya and J.~L. G\'amez.
\newblock {Bifurcation theory and related problems: anti-maximum principle and
  resonance}.
\newblock {\em Comm. Part. Diff. Equations}, 26:1879--1911, 2001.

\bibitem{apr}
J.~M. Arrieta, R.~Pardo, and A.~Rodr{\'\i}guez-Bernal.
\newblock {Bifurcation and stability of equilibria with asymptotically linear
  boundary conditions at infinity}.
\newblock {\em Proc. Royal Soc. Edinburgh~A}, 137:225--252, 2007.

\bibitem{apr2}
J.~M. Arrieta, R.~Pardo, and A.~Rodr{\'\i}guez-Bernal.
\newblock {Equilibria and global dynamics of a problem with bifurcation from
  infinity}.
\newblock {\em J. Diff. Equations}, 246:2055--2080, 2009.

\bibitem{b}
H.~Berestycki.
\newblock {Le nombre de solutions de certains probl\`emes semi-lin\'eaires
  elliptiques}.
\newblock {\em J. Funct. Anal.}, 40:1--29, 1981.

\bibitem{bnv}
H.~Berestycki, L.~Nirenberg, and S.~R.~S. Varadhan.
\newblock {The principal eigenvalue and maximum principle for second order
  elliptic operators in general domains}.
\newblock {\em Comm. Pure Appl. Math.}, 47:47--92, 1994.

\bibitem{bi}
I.~Birindelli.
\newblock {Hopf's lemma and anti-maximum principle in general domains}.
\newblock {\em J. Diff. Equations}, 119:450--472, 1995.

\bibitem{bdi}
V.~Bobkov, P.~Dr\'abek, and Y.~Ilyasov.
\newblock {Estimates on the spectral interval of validity of the anti-maximum
  principle}.
\newblock {\em J. Diff. Equations}, 269:2956--2976, 2020.

\bibitem{bt}
V.~Bobkov and M.~Tanaka.
\newblock {On the antimaximum principle for the p-Laplacian and its sublinear
  perturbations}.
\newblock {\em Part. Diff. Equations Appl.}, 4:21, 2023.

\bibitem{bms}
W.~Borrelli, S.~Mosconi, and M.~Squassina.
\newblock {Concavity properties for solutions to $p$-Laplace equations with
  concave nonlinearities}.
\newblock {\em Adv. Calc. Var.}, 17:79--97, 2024.

\bibitem{bl}
H.~J. Brascamp and E.~H. Lieb.
\newblock {On extensions of the Brunn-Minkowski and Pr\'ekopa-Leindler
  theorems, including inequalities for log concave functions, and with an
  application to the diffusion equation}.
\newblock {\em J. Funct. Anal.}, 22:366--389, 1976.

\bibitem{HB}
H.~Brezis.
\newblock {\em {Analyse fonctionnelle, th\'eorie et applications}}.
\newblock Masson, 1987.

\bibitem{bo}
H.~Brezis and L.~Oswald.
\newblock {Remarks on sublinear elliptic equations}.
\newblock {\em Nonlinear Anal. Theory Meth. Appl.}, 10:55--64, 1986.

\bibitem{cc}
X.~Cabr\'e and S.~Chanillo.
\newblock {Stable solutions of semilinear elliptic problems in convex domains}.
\newblock {\em Selecta Math. (N.S.)}, 4:1--10, 1998.

\bibitem{cf}
L.~A. Caffarelli and A.~Friedman.
\newblock {Convexity of solutions of semilinear elliptic equations}.
\newblock {\em Duke Math.~J.}, 52:431--456, 1985.

\bibitem{casp}
L.~A. Caffarelli and J.~Spruck.
\newblock {Convexity properties of solutions to some classical variational
  problems}.
\newblock {\em Comm. Part. Diff. Equations}, 7:1337--1379, 1982.

\bibitem{cmo}
J.~Campos, J.~Mawhin, and R.~Ortega.
\newblock { Maximum principles around an eigenvalue with constant
  eigenfunctions}.
\newblock {\em Commun. Contemp. Math.}, 10:1243--1259, 2008.

\bibitem{cp}
Ph. Cl\'ement and L.~A. Peletier.
\newblock {An anti-maximum principle for second-order elliptic operators}.
\newblock {\em J. Diff. Equations}, 34:218--229, 1979.

\bibitem{cs}
Ph. Cl\'ement and G.~Sweers.
\newblock {Uniform anti-maximum principles}.
\newblock {\em J. Diff. Equations}, 164:118--154, 2000.

\bibitem{c}
A.~Colesanti.
\newblock {Log-concavity of the first Dirichlet eigenfunction of some elliptic
  differential operators and convexity inequalities for the relevant
  eigenvalue}.
\newblock {\em Acta Math. Sci. Ser. B}, 45:143--152, 2025.

\bibitem{cfls}
A.~Colesanti, E.~Francini, G.~Livshyts, and P.~Salani.
\newblock {The Brunn-Minkowski inequality for the first eigenvalue of the
  Ornstein-Uhlenbeck operator and log-concavity of the relevant eigenfunction}.
\newblock {\em Anal. PDE}, 2025.

\bibitem{cqs}
A.~Colesanti, L.~Qin, and P.~Salani.
\newblock {Geometric properties of solutions to elliptic PDE's in Gauss space
  and related Brunn-Minkowski type inequalities}.
\newblock {\em https://arxiv.org/abs/2502.00184}, 2025.

\bibitem{crfr}
G.~Crasta and I.~Fragal\`a.
\newblock {Concavity properties of solutions to Robin problems}.
\newblock {\em Camb. J. Math.}, 9:177--212, 2021.

\bibitem{dchb}
C.~{D}e Coster and P.~Habets.
\newblock {\em {Two-point boundary value problems: lower and upper solutions}},
  volume 1150 of {\em Math. Sci. Eng.}
\newblock Elsevier B. V., Amsterdam, 2006.

\bibitem{dch}
C.~{D}e Coster and M.~Henrard.
\newblock {Existence and localization of solution for second order elliptic BVP
  in presence of lower and upper solutions without any order}.
\newblock {\em J. Diff. Equations}, 145:420--452, 1998.

\bibitem{d}
Y.~Du.
\newblock {\em {Order structure and topological methods in nonlinear partial
  differential equations, Maximum principles and applications}}.
\newblock World Scientific, 2006.

\bibitem{f}
D.~L. Finn.
\newblock {Convexity of level curves for solutions to semilinear elliptic
  equations}.
\newblock {\em Comm. Pure Appl. Anal.}, 7:1335--1343, 2008.

\bibitem{fgtt}
J.~Fleckinger, J.-P. Gossez, P.~Tak\'a{\v{c}}, and F.~De Th\'elin.
\newblock {Existence, nonexistence et principe de l’antimaximum pour le
  p-laplacien}.
\newblock {\em C. R. Acad. Sci. Paris S\'er. I Math.}, 321:731--734, 1995.

\bibitem{fht}
J.~Fleckinger, J.~Hern\'andez, and F.~De Th\'elin.
\newblock {Estimate of the validity interval for the antimaximum principle and
  application to a non-cooperative system}.
\newblock {\em Rostock. Math. Kolloq.}, 69:19--32, 2014/15.

\bibitem{gms}
M.~Gallo, S.~Mosconi, and M.~Squassina.
\newblock {Power law convergence and concavity for the logarithmic
  Schr\"odinger equation}.
\newblock {\em https://arxiv.org/abs/2411.01614}, 2024.

\bibitem{gnn}
B.~Gidas, W.-M. Ni, and L.~Nirenberg.
\newblock {Symmetry and related properties via the maximum principle}.
\newblock {\em Comm. Math. Phys.}, 68:209--243, 1979.

\bibitem{gg}
F.~Gladiali and M.~Grossi.
\newblock {Strict convexity of level sets of solutions of some nonlinear
  elliptic equations}.
\newblock {\em Proc. Royal Soc. Edinburgh~A}, 134:363--373, 2004.

\bibitem{go}
J.-P. Gossez and P.~Omari.
\newblock {Non-ordered lower and upper solutions in semilinear elliptic
  problems}.
\newblock {\em Comm. Partial Differential Equations}, 19:1163--1184, 1994.

\bibitem{gk}
A.~Greco and B.~Kawohl.
\newblock {Log-concavity in some parabolic problems}.
\newblock {\em Elect. J. Diff. Equations}, 1999:19, 1999.

\bibitem{gp}
A.~Greco and G.~Porru.
\newblock {Convexity of solutions to some elliptic partial differential
  equations}.
\newblock {\em SIAM J. Math. Anal.}, 24:833--839, 1993.

\bibitem{gro}
M.~Grossi.
\newblock {On the number of critical points of solutions of semilinear elliptic
  equations}.
\newblock {\em Elec. Res. Arch.}, 29:4215--4228, 2021.

\bibitem{ho}
P.~Habets and P.~Omari.
\newblock {Existence and localization of solutions of second order elliptic
  problems using lower and upper solutions in the reversed order}.
\newblock {\em Top. Meth. Nonlinear Anal.}, 8:25--56, 1996.

\bibitem{hns}
F.~Hamel, N.~Nadirashvili, and Y.~Sire.
\newblock {Convexity of level sets for elliptic problems in convex domains or
  convex rings: two counterexamples}.
\newblock {\em Amer. J. Math.}, 138:499--527, 2016.

\bibitem{h}
P.~Hess.
\newblock {An anti-maximum principle for linear elliptic equations with an
  indefinite weight function}.
\newblock {\em J.~Diff. Equations}, 41:369--374, 1981.

\bibitem{is}
K.~Ishige and P.~Salani.
\newblock {Is quasi-concavity preserved by heat flow?}
\newblock {\em Arch. Math.}, 90:450--460, 2008.

\bibitem{ist}
K.~Ishige, P.~Salani, and A.~Takatsu.
\newblock {To logconcavity and beyond}.
\newblock {\em Comm. Contemp. Math.}, 22:195009, 2020.

\bibitem{ist2}
K.~Ishige, P.~Salani, and A.~Takatsu.
\newblock {Characterization of $F$-concavity preserved by the Dirichlet heat
  flow}.
\newblock {\em Trans. Amer. Math. Soc.}, 377:5705--5748, 2024.

\bibitem{ka2}
B.~Kawohl.
\newblock {\em {Rearrangements and Convexity of Level Sets in Partial
  Differential Equations}}, volume 1150 of {\em Lect. Notes Math.}
\newblock Springer-Verlag, 1985.

\bibitem{ka3}
B.~Kawohl.
\newblock {When are solutions to nonlinear elliptic boundary value problems
  convex~?}
\newblock {\em Comm. Part. Diff. Equations}, 10:1213--1225, 1985.

\bibitem{ka4}
B.~Kawohl.
\newblock {A remark on N. Korevaar's concavity maximum principle and on the
  asymptotic uniqueness of solutions to the plasma problem}.
\newblock {\em Math. Meth. Appl. Sci.}, 8:93--101, 1986.

\bibitem{k}
G.~Keady.
\newblock { The power concavity of solutions of some semilinear elliptic
  boundary-value problems}.
\newblock {\em Bull. Aust. Math. Soc.}, 31:181--184, 1985.

\bibitem{ke1}
A.~U. Kennington.
\newblock {Power concavity and boundary value problems}.
\newblock {\em Indiana Univ. Math.~J.}, 34:687--704, 1985.

\bibitem{kol}
A.~V. Koleshnikov.
\newblock {On diffusion semigroups preserving the log-concavity}.
\newblock {\em J. Funct. Anal.}, 186:196--205, 2001.

\bibitem{ko1}
N.~J. Korevaar.
\newblock {Convex solutions to nonlinear elliptic and parabolic boundary value
  problems}.
\newblock {\em Indiana Univ. Math.~J.}, 32:603--614, 1983.

\bibitem{kl}
N.~J. Korevaar and J.~L. Lewis.
\newblock {Convex solutions of certain elliptic equations have constant rank
  Hessians}.
\newblock {\em Arch. Ration. Mech. Anal.}, 97:19--32, 1987.

\bibitem{kor1}
P.~Korman.
\newblock {Monotone approximations of unstable solutions}.
\newblock {\em J. Comp. Appl. Math.}, 136:309--315, 2001.

\bibitem{kor}
P.~Korman.
\newblock {Exact multiplicity of solutions for some semilinear Dirichlet
  problems}.
\newblock {\em Appl. Anal.}, 98:1483--1495, 2019.

\bibitem{ku}
M.~K{\"u}hn.
\newblock {Power- and Log-concavity of viscosity solutions to some elliptic
  Dirichlet problems}.
\newblock {\em Comm. Pure Appl. Anal.}, 17:2773--2788, 2018.

\bibitem{ll}
E.~M. Landesman and A.~C. Lazer.
\newblock {Perturbations of linear elliptic boundary value problems at
  resonance}.
\newblock {\em J. Math. Mech.}, 19:609--623, 1970.

\bibitem{lv}
K.-A. Lee and J.~L. V\'azquez.
\newblock {Parabolic approach to nonlinear elliptic eigenvalue problems}.
\newblock {\em Adv. Math.}, 219:2006--2028, 2008.

\bibitem{l}
C.-S. Lin.
\newblock {Uniqueness of least energy solutions to a semilinear elliptic
  equations in $\R^2$}.
\newblock {\em Manuscripta Math.}, 84:13--19, 1994.

\bibitem{li}
P.-L. Lions.
\newblock {Two geometrical properties of solutions of semilinear problems}.
\newblock {\em Appl. Anal.}, 12:267--272, 1981.

\bibitem{ml}
L.~G. Makar-Limanov.
\newblock {The solution of the Dirichlet problem for the equation $\Delta u=-1$
  in a convex region}.
\newblock {\em Mat. Zametki}, 9:89--92, 1971.

\bibitem{p}
Y.~Pinchover.
\newblock {Maximum and anti-maximum principles and eigenfunctions estimates via
  perturbation theory of positive solutions of elliptic equations}.
\newblock {\em Math. Ann.}, 314:555--590, 1999.

\bibitem{pw}
M.~Protter and H.~F. Weinberger.
\newblock {\em {Maximum principles in differential equations}}.
\newblock Prentice-Hall, Englewood Cliffs, 1967.

\bibitem{r}
W.~Reichel.
\newblock {Sharp parameter ranges in the uniform anti-maximum principle for
  second-order ordinary differential operators}.
\newblock {\em Z. Angew. Math. Phys.}, 54:822--838, 2003.

\bibitem{rss}
M.~C.~M. Rezende, P.~M. S\'anchez-Aguilar, and E.~A.~B. Silva.
\newblock {A Landesman–Lazer local condition for semilinear elliptic
  problems}.
\newblock {\em Bull. Braz. Math. Soc.}, 50:889--911, 2019.

\bibitem{sak}
S.~Sakaguchi.
\newblock {Concavity properties of solutions to some degenerate elliptic
  Dirichlet problems}.
\newblock {\em Ann. Scuola Norm. Sup. Pisa}, 14:403--421, 1987.

\bibitem{sa}
D.~H. Sattinger.
\newblock {Monotone methods in nonlinear elliptic and parabolic boundary value
  problems}.
\newblock {\em Indiana Univ. Math. J.}, 21:979--1000, 1972.

\bibitem{sh}
J.~Shi.
\newblock {A new proof of anti-maximum principle via a bifurcation approach}.
\newblock {\em Results Math.}, 48:162--167, 2005.

\bibitem{st}
S.~Steinerberger.
\newblock {On concavity of solutions of the nonlinear Poisson equation}.
\newblock {\em Arch. Ration. Mech. Anal.}, 242:209--224, 2022.

\bibitem{sw}
G.~Sweers.
\newblock { $L^N$ is sharp for the anti-maximum principle}.
\newblock {\em J.~Diff. Equations}, 134:148--153, 1997.

\bibitem{t}
P.~Tak\'a{\v{c}}.
\newblock {An abstract form of maximum and anti-maximum principles of Hopf's
  type}.
\newblock {\em J. Math. Anal. Appl.}, 201:339--364, 1996.

\bibitem{x}
L.~Xu.
\newblock {A microscopic convexity theorem of level sets for solutions to
  elliptic equations}.
\newblock {\em Calc. Var. Partial Differential Equations}, 40:51--63, 2011.

\end{thebibliography}

\end{document}